\newtheorem{thm}{Theorem}[section]
\newtheorem{cor}[thm]{Corollary}
\newtheorem{defi}[thm]{Definition}
\newtheorem{prop}[thm]{Proposition}
\newtheorem{rem}[thm]{Remark}
\newtheorem{hyp}[thm]{Assumption}
\newenvironment{proof}{\noindent {\bf Proof \phantom{9}}}
{\hfill $\square$ \vspace{0.25cm}}
\def\be{\begin{eqnarray}}
\def\ee{\end{eqnarray}}
\def\ben{\begin{eqnarray*}}
\def\een{\end{eqnarray*}}
\numberwithin{equation}{section}
\numberwithin{figure}{section}
\def\be{\begin{eqnarray}}
\def\ee{\end{eqnarray}}
\def\me{\medskip\noindent}
\def\bi{\bigskip\noindent}
\newcommand{\Co}{\mathcal{C}}
\newcommand{\card}{\mbox{Card}}
\newcommand{\supp}{\mbox{supp}}
\def\D{\mathbb{D}}
\def\N{\mathbb{N}}
\def\P{\mathbb{P}}
\def\R{\mathbb{R}}
\def\E{\mathbb{E}}
\def\X{\mathcal{X}}
\def\U{\mathcal{U}}
\def\ind{{\mathchoice {\rm 1\mskip-4mu l} {\rm 1\mskip-4mu l}
{\rm 1\mskip-4.5mu l} {\rm 1\mskip-5mu l}}}
\title{\bf Stochastic dynamics of adaptive trait and neutral marker driven by eco-evolutionary feedbacks}
\author{Sylvain Billiard\thanks{Laboratoire GEPV, UFR de Biologie, Université des Sciences et Technologies Lille 1, Cité scientifique, 59655 Villeneuve d'Ascq Cedex-France; E-mail: \texttt{sylvain.billiard@univ-lille1.fr}}, \qquad Régis Ferrière\thanks{Eco-Evolution Mathématique, CNRS UMR 7625, Ecole Normale Supérieure, 46 rue d'Ulm, 75230 Paris, France; E-mail: \texttt{ferriere@biologie.ens.fr}},\qquad \qquad  Sylvie M\'el\'eard\thanks{CMAP, Ecole Polytechnique, CNRS, route de
    Saclay, 91128 Palaiseau Cedex-France; E-mail: \texttt{sylvie.meleard@polytechnique.edu}}, \qquad Viet Chi Tran\thanks{Laboratoire P. Painlevé, UFR de Mathématiques, Université des Sciences et Technologies Lille 1, Cité scientifique, 59655 Villeneuve d'Ascq Cedex-France; E-mail: \texttt{chi.tran@math.univ-lille1.fr}}}
\date{\today}
\begin{document}

\maketitle

\begin{abstract}How the neutral diversity is affected by selection and adaptation is investigated in an eco-evolutionary framework. In our model, we study a finite population in continuous time, where each individual is characterized by a trait under selection and a completely linked neutral marker. Population dynamics are driven by births and deaths, mutations at birth, and competition between individuals. Trait values influence ecological processes (demographic events, competition), and competition generates selection on trait variation, thus closing the eco-evolutionary feedback loop. The demographic effects of the trait are also expected to influence the generation and maintenance of neutral variation. We consider a large population limit with rare mutation, under the assumption that the neutral marker mutates faster than the trait under selection. We prove the convergence of the stochastic individual-based process to a new measure-valued diffusive process with jumps that we call Substitution Fleming-Viot Process (SFVP). When restricted to the trait space this process is the Trait Substitution Sequence first introduced by Metz et al. (1996). During the invasion of a favorable mutation, a genetical bottleneck occurs and the marker associated with this favorable mutant is hitchhiked. By rigorously analysing the hitchhiking effect and how the neutral diversity is restored afterwards, we obtain the condition for a time-scale separation; under this condition, we show that the marker distribution is approximated by a Fleming-Viot distribution between two trait substitutions. We discuss the implications of the SFVP for our understanding of the dynamics of neutral variation under eco-evolutionary feedbacks and illustrate the main phenomena with simulations. Our results highlight the joint importance of mutations, ecological parameters, and trait values in the restoration of neutral diversity after a selective sweep. \end{abstract}

\bigskip
\emph{MSC 2000 subject classification:} 92D25, 60J80,  92D15, 60J75
\bigskip

\emph{Key-words:} Mutation-selection; measure-valued individual-based model; neutral diversity; hitchhiking; selective sweeps; adaptive dynamics; limit theorems for multi-scale processes; Substitution Fleming-Viot Process.

\emph{Acknowledgements:} S.B., S.M. and V.C.T. have been supported by the ANR MANEGE (ANR-09-BLAN-0215), the Chair ``Modélisation Mathématique et Biodiversité" of Veolia Environnement-Ecole Polytechnique-Museum National d'Histoire Naturelle-Fondation X. V.C.T. also acknowledges support from Labex CEMPI (ANR-11-LABX-0007-01). R.F. acknowledges support from National Science Foundation Award EF-0623632, the Institut Universitaire de France, and the Agence Nationale de la Recherche ("EVORANGE" grant).

\bigskip

\section{Introduction}
\label{sec:intro}

	The science of biodiversity currently faces the challenge of understanding how ecological processes shape evolutionary change, and reciprocally how evolution affects the structure and function of ecological systems (Schoener \cite{schoener}). Such eco-evolutionary feedbacks determine the dynamics of so-called adaptive traits -quantitative characters that are heritable yet mutable from parent to offspring (Dieckmann Law \cite{dieckmannlaw}, Metz et al. \cite{metzgeritzmeszenajacobsheerwaarden}). Under the combined assumptions of large population and rare mutation scalings, the time evolution of an adaptive trait can be described as a sequence of mutant invasions, each being driven by positive selection in the ecological context set by the `resident' value of the adaptive trait (Metz et al. \cite{metznisbetgeritz}). The resulting evolutionary model is a jump process called the Trait Substitution Sequence (TSS): every new trait mutant either goes extinct, or replaces the resident, causing the TSS to jumps from the former resident population equilibrium to a new equilibrium (Metz et al. \cite{metzgeritzmeszenajacobsheerwaarden}, Champagnat \cite{champagnat06} and Champagnat et al. \cite{champagnatferrieremeleard2}). In population genetics, these jumps are known as selective sweeps (Barton \cite{barton98}, Stephan et al. \cite{stephanwiehelenz}). Previous works by \cite{lloyd,christiansenloeschcke,hammerstein96,weissing96,matessischneider,eshelfeldmann} support the view that the TSS as a model of long-term phenotypic evolution is relatively insensitive to the details of the genetic determination of the trait.

\me
	Whereas eco-evolutionary feedbacks can result in variation of adaptive traits among populations (and even within populations when evolutionary branching occurs, Geritz et al. \cite{geritzkisdimeszenametz}), much of the molecular diversity measured by population geneticists involve DNA sequences of no known adaptive value, i.e. selectively neutral. A neutral sequence that is physically linked in the genome to the sequence that codes for the adaptive trait is called a marker of that trait. A longstanding question in evolutionary theory is understanding how variation in such molecular markers evolves, and how patterns of neutral molecular evolution can be used to infer the history of trait mutation that have driven past adaptation.
	
	When adaptive mutations are rare, adaptation proceeds as a series of selective sweeps: a trait mutation occurs while the population is monomorphic for the trait, and increases rapidly in frequency toward fixation. Following on from Kojima and Schaffer \cite{kojimaschaffer}, Maynard Smith and Haigh \cite{maynardsmithhaigh} pointed out that selective sweeps purge genetic variation at linked sites: a particular marker allele goes to fixation as a consequence of linkage with the selected allele, a phenomenon they dubbed the `hitchhiking effect'. Maynard Smith and Haigh's deterministic model was revisited in a stochastic approach by Ohta and Kimura \cite{ohtakimura}. These seminal studies of hitchhiking focused on the short-term dynamics of an interaction between two alleles at the locus under selection and two alleles at the neutral locus. Long-term dynamics were considered first by Kaplan et al. \cite{kaplan} who developed a stochastic model for finite populations to describe the effect of recurrent hitchhiking. In order to describe stationary levels of nucleotide diversity at the marker locus, they used the infinite site model and a coalescent approach under the assumption of constant population size and constant selection coefficients. This has generated an abundant theoretical literature on modeling the impact of selection on neutral polymorphism (Barton \cite{barton}, Etheridge et al. \cite{etheridgepfaffelhuberwakolbinger}, Durrett-Schweinsberg \cite{durrettschweinsberg} and references therein). Recent deterministic models have relaxed the assumption of constant selection either because of the presence of genetic backgrounds (e.g. assuming a quantitative trait \cite{chevinhospital}) or in the case of a parasite, because of the complexity of the demographic events involved in the life cycle \cite{schneiderkim}. All previous models assume constant population size and constant selection, or that the population size is independent of the selective value of the individuals. \\

	In this article, our goal is to relax these key assumptions. Under general ecological scenarios, eco-evolutionary feedbacks operate: as the adaptive trait evolves, population size and selection co-vary. The eco-evolutionary process of adaptive trait and neutral marker dynamics requires a rigorous mathematical framework, the foundation of which we establish here.
We start with a `microscopic', individual-based model where individuals have two heritable characteristics: (i) an adaptive trait that influences their intrinsic demographic rates and ecological interactions, and (ii) a genetic marker that has no demographic or ecological effects, hence, is selectively neutral. This work focuses entirely on asexual populations and short genomic regions that remain perfectly linked to the loci under selection, neglecting recombination. 
The population is described by a measure according to which each individual is represented by a Dirac mass that weights its characters. This leads to study the population eco-evolutionary dynamics as a measure-valued stochastic process.

The dynamics are driven by competition between individuals, asexual reproduction without or with mutation, and death. Variation in population size and selection as the trait evolves are mediated by the demographic effects of change in the trait. These effects are expected to influence the generation and maintenance of neutral variation.

	The effect of mutation on the marker can be continuous or discrete. Our framework thus encompasses a variety of conventional mutation models such as the two-alleles model, the stepwise mutation model, and the continuous state mutation model. Our distinctive assumption here is that the marker mutation process is much faster than the trait mutation process but much slower than the ecological time-scale of birth and death events. This is supported by the fact that most mutations are neutral or nearly neutral (such as mutations involved in microsatellite variation). Therefore, there are three time scales in the model: the fast ecological time scale of birth and death events, the slow time scale of trait mutation, and an intermediate time scale of marker mutation. We study the joint process of trait and marker dynamics on the trait mutation time scale.
	
	We are interested in limit theorems when the population carrying capacity goes to infinity. Then, the population size stabilizes in a neighborhood of the ecological equilibrium and jumps to another equilibrium when a successful trait mutant goes to fixation in the population. This is the TSS dynamics of the adaptive trait. It does not depend on the marker and has been mathematically proved by Champagnat \cite{champagnat06}. The novelty in the model and in the proofs come from the time-scales difference for the marker and trait mutations.
The study of the marker distribution during the invasion period requires careful consideration of the individual process and of the different scales involved. In a first period, starting with the single invading mutant, we prove that the marker distribution remains close to a Dirac mass at the value of the initial mutant. Until the next jump of the TSS, the marker evolves as a stochastic distribution-valued process. In the case where the marker mutation effects are continuous and small, this is a Fleming-Viot process whose drift and covariance depend on the resident adaptive trait. In every cases, for any marker mutation model, the collated dynamics define a measure-valued diffusive process with jumps that we call Substitution Fleming-Viot Process (SFVP). The convergence of the microscopic process to the SFVP is shown both in the sense of finite dimensional distributions and in the sense of occupation measure, thus improving previous results of Champagnat \cite{champagnat06}.
	
	From a biological standpoint, we recover the conventional hitchhiking phenomenon: when a new mutant trait appears and sweeps through the population to fixation, the marker carried by the mutant individual is hitchhiked, and the marker distribution undergoes a genetical bottleneck. The mathematical construction of the SFVP process has new implications of biological relevance. Neutral diversity is restored after each adaptive jump, but as the adaptive trait evolves, population size, the mutation rate, genetic drift and demographic fluctuations change, which causes the rate of neutral polymorphism build-up and the moments of the marker distribution to change too. This suggests that the nature and structure of the whole eco-evolutionary feedback loop (i.e. how adaptive traits influence demographic rates and ecological interactions, and how ecological processes shape selection pressures on adaptive traits) may be important to explain the extreme disparities in genetic neutral diversity observed among species, even closely related ones and in the absence of differences in recombination profiles (Cutter and Payseur \cite{cutterpayseur}). In fact, it is well-known that demographic differences due to external causes (demographic bottleneck or population expansion due to environmental changes) can affect neutral diversity of a population and that closely related species can show very different neutral diversity patterns. Here, we show that internal causes of demographic variation involved in adaptation can also affect species differently.
	
	The article is organized as follows. In Section \ref{sec:IBM}, we start with the model description. The  stochastic individual-based process and its key assumptions are carefully described and examples are provided. A key parameter is $K$, an integer that gives the order of the population size and is used to rescale the mutation rates and kernels. By letting $K$ go to infinity we study the large population limit of the stochastic process. The main theorem is enounced and discussed in Section \ref{section:theorem}, where biological implications are also highlighted.
Time scale separations implied by the dependence in $K$ of the trait and marker mutations lead to homogenization phenomena and then to the SFVP. Our mathematical analysis provides a precise description of the genetical bottleneck that occurs at each trait substitution. We show that the marker of the initial mutant individual dominates in the marker distribution of the mutant population until this population reaches a neighborhood of the new ecological equilibrium.
Then, we present two numerical examples based on an ecological model adapted from Dieckmann and Doebeli \cite{dieckmanndoebeli}. In the first example, marker mutation is described by a continuous state model that leads to a piecewise Fleming-Viot process (section \ref{section:DD}) for the marker. In the second example, marker mutation follows a discrete two-allele model; then the classical Wright-Fisher diffusion \eqref{section:coexistence} is recovered. Further generalizations are discussed.
The proof of the main theorem in the adaptive dynamics scaling is in Section \ref{section:proof}. After having introduced a semi-martingale decomposition of our stochastic measure-valued process, we start with recalling and refining the result of Champagnat \cite{champagnat06} for the convergence of trait-marginals. For this purpose, we introduce the M1-topology on the Skorokhod space where the TSS lives, using some ideas of Collet et al. \cite{CMM}. This allows us to obtain the convergence to the TSS for the topology of occupation measure, hence providing additional pathwise information that complement the results of \cite{champagnat06}. The second part of the proof focuses on the marker distribution in an invading mutant population. This gives the result on the genetical bottleneck. Then, between two trait substitutions, the dynamics of the marker converges to a diffusive measure-valued process. As a conclusion of the proof, we show the convergence to the SFVP for the topology of occupation measures.

\section{The stochastic model}
\label{sec:IBM}

We consider an asexual population driven by births and deaths where each individual is characterized by hereditary types: a phenotypic trait under selection and a neutral marker. The trait and marker spaces ${\cal X}$ and ${\cal U}$ are assumed to be
compact subsets of $\mathbb{R}$.  The type of individual $i$ is thus a pair $(x_i,u_i)$, $x_{i}\in {\cal X}$ being the trait
value and $u_i\in {\cal U}$ its neutral marker.  The individual-based microscopic model from which we start is a stochastic birth and death process with density-dependence whose demographic parameters are functions of the trait under selection and are independent of the marker. We assume that the population size scales with an integer parameter $K$ tending to infinity while individuals are weighted with $\frac{1}{K}$.
At any time $t\geq 0$, we have a finite number $N^K_t$ of individuals, each of them holding trait and marker values in ${\cal X}\times {\cal U}$. Let us denote by $((x_{1}, u_{1}), \ldots, (x_{N^K_t},u_{N^K_t}))$ the trait and marker values of these individuals. The state of the population at time
$t\geq 0$, rescaled by $K$, is described by the point measure
\begin{equation}
  \label{eq:nu_t}
  \nu^{K}_t={1\over K}\sum_{{i=1}}^{N^K_{t}} \delta_{(x_{i},u_{i})},
\end{equation}
where $\delta_{(x,u)}$ is the Dirac measure at $(x,u)$. This measure belongs to the set of finite point measures on $\mathcal{X}\times {\cal U}$ with mass
$1/K$. This set is a subset of the set $\mathcal{M}_{F}({\cal X}\times {\cal U})$  of finite measures on ${\cal X}\times {\cal U}$, which is embedded by the weak convergence topology.
We denote by $\: \langle\nu,f\rangle$  the integral of the measurable function $f$ with
respect to the measure $\nu$ and  by $\mathrm{Supp}(\nu)$  the support of $\nu$.  Then $\: \langle\nu^{K}_t,{\bf 1}\rangle=\frac{N^K_t}{K}$.

\me For any $t\geq 0$, we also introduce  the trait marginal of the measure $\nu^K_t$  on $\mathcal{X}$, denoted by $ X^{K}_t$ and defined by
$$X^{K}_t = {1\over K}\sum_{{i=1}}^{N^K_{t}} \delta_{x_{i}}.$$ Therefore, the population measure $\nu^K_{t}$ writes
\be
\label{dec-population} \nu_t^{K}(dx,du)= X^K_{t}(dx)\,\pi^K_{t}(x,du)\ee
where $ \pi^K_{t}(x,du)$ is the marker distribution for a given trait value $x$
defined  by
\be
\label{def-pi}
\pi^K_{t}(x,du) = \frac{\sum_{i=1}^{N^K_t} \ind_{x_{i}=x}\delta_{u_i} }{\sum_{i=1}^{N^K_t} \ind_{x_{i}=x}}.
\ee

\bi
Our purpose is to study the asymptotic behavior  of the measure-valued process $\nu^K$ at large times, when the trait and marker are inherited but mutations occur. The main interest of our model is that these mutations happen at different time scales for trait and marker, but both longer than the individuals lifetime scale. The trait mutates much slower than the marker and drives the evolution time scale. Thus, the limiting behavior results from the interplay of three time scales: births and deaths, trait mutations and marker mutations.

\bi
We describe the individuals' life history. The trait has an influence on the ability of individuals to survive (including competition with other ones) and to reproduce but the marker is neutral. The demographic parameters are thus functions of the trait only and are defined on ${\cal X}$.

\begin{hyp}
  \label{hypo}
 \medskip
\begin{itemize}
\item An individual with trait $x$ and marker $u$ reproduces with birth rate given by
 $
   0\leq  b(x)\leq \bar b$, the function $ b$ being continuous.

\item Reproduction produces a single offspring which usually inherits the trait and marker of its ancestor except when a mutation occurs. Mutations on trait and marker occur independently with probabilities $p_{K}$ and $q_{K}$ respectively. Mutations are rare and  the marker mutates much more often than the trait. We assume that
\be
\label{echelle} q_{K} = p_{K} \, r_K\ , \quad \hbox{ with }  p_{K}=\frac{1}{K^2}, \quad q_{K} \to_{K\to \infty} 0\ , \quad r_K \to_{K\to \infty} + \infty.
\ee

  \item When a trait mutation occurs, the new trait of the descendant  is $x+k\in {\cal X}$ with $k$ chosen according to the probability measure $m(x,k)dk$.

\item  When a marker mutation occurs, the new marker of the descendant  is $u+h\in {\cal U}$ with $h$ chosen according to  the probability measure $G_{K}(u,dh)$. 

  \noindent For any $u\in \U$, $G_{K}(u,.)$ 
  is approximated as follows when $K$ tends to infinity:
    \be
   \label{hyp-D}
\lim_{K\rightarrow +\infty}\sup_{u\in \U} \bigg | {r_K\over K} \int_{ \U} (\phi(u+h)-\phi(u)) G_{K}(u,dh) - A\phi\bigg | =0,
  \ee
where  $(A, {\cal D}(A))$ is  the generator of a Feller semigroup and $\phi\in {\cal D}(A)\subseteq \mathcal{C}_b(\U,\R)$, the set of continuous bounded real functions on $\U$.


\item An individual with trait $x$ and marker $u$ dies with intrinsic death rate $0\leq  d(x)\leq \bar d$, the function $ d$ being continuous. Moreover the individual experiences competition the effect of which is an additional death rate $\,\eta(x)\ C*\nu^K_{t}(x)= \frac{\eta(x)}{K}\sum_{i=1}^{N^K_t}C(x-x_{i})$. The quantity $C(x-x_{i})$ describes the competition pressure exerted by an individual with trait $\,x_{i}$ on an individual with trait $x$.
 We assume that the functions $C$ and $\eta$ are continuous and that there exists  $\underline{\eta}>0$ such that
\be
\label{hyp-competition} \forall x, y \in {\cal X},\quad  \eta(x)\ C(x-y)\geq  \underline{\eta}>0.\ee

\end{itemize}
  \end{hyp}

\bigskip

\me
\noindent A classical choice of competition function $C$ is $C\equiv 1$ which is called ``mean field case" or ``logistic case". In that case the competition death rate is $\eta(x) N^K_t/K$.


\medskip
\begin{rem} Let us insist on the generality of Assumption \eqref{hyp-D} which allows a larger set of possible dynamics.

\begin{itemize}
\item Equation \eqref{hyp-D} is for example true for ${\cal U}=[u_{1},u_{2} ]$, $G_{K}$  a  centered Gaussian law  (conditioned to ${\cal U}$) with variance $\sigma_{K}\to 0$ such that $\lim_{K}  \sigma_{K}^2  {r_K\over K} = \sigma^2$ and $A\phi={\sigma^2\over 2}\phi''$ for $\phi\in \Co^2$ with $\phi'(u_{1})=\phi'(u_{2}) =0$.

\noindent Choosing for example $r_K=K^{3/2}$, $q_K=1/\sqrt{K}$ and $\sigma^2_K=1/\sqrt{K}$ works. This choice can be seen as a continuous state generalization of the stepwise mutation model \cite{ohtakimura73}.

\item If in addition the distribution $G_{K}$ has a non zero mean $\, \mu_{K}$ such that $\,{r_K\mu_{K}\over K} \to \mu>0$ corresponding to a mutational directional drift, then
the operator $A$ will be defined by $A\phi = {\sigma^2\over 2}\phi'' + \mu \phi'$.



\item If we relax the compactness of  ${\cal U}$ and assume that   ${\cal U} = \mathbb{R}$, a third choice consists in taking for $G_{K}$ the law of a Pareto variable with index $\alpha\in (1,2)$ divided by  $K^{\eta/\alpha}$, for $\eta\in (0,1]$. Then it has been proved in Jourdain et al. \cite{JMW} that
$$\lim_{K} \sup_{u} \left|K^\eta \int_{  \mathbb{R}} (\phi(u+h)-\phi(u)) G_{K}(u,dh) - {\alpha\over 2} D^\alpha \phi(u)\right|=0,$$
where
$$ D^\alpha \phi(u) = \int_{  \mathbb{R}}(\phi(u+h)-\phi(u)- h\phi'(u)\ind_{|h|\leq 1}) {dh\over |h|^{1+\alpha}}$$ is the fractional Laplacian with index $\alpha$. Thus if we take $r_K$ such that ${r_K\over K^{1+\eta}}$ converges as $K$ tends to infinity, and choose $A=D^\alpha$ in \eqref{hyp-D}, Assumptions \eqref{echelle}-\eqref{hyp-D} will be satisfied as soon as $\eta<1$.

\item
Another very interesting case is the discrete case when $\U=\{a,A\}$ is a set of two alleles. The mutation kernel is given by
\begin{equation}
G_K(u,dv)=\ind_{u=a}\,q_a\, \delta_A(dv)+\ind_{u=A}\,q_A \,\delta_a(dv).\label{DK:bernoulli}
\end{equation}In this case, \eqref{hyp-D} implies that $r_K/K$ has a limit when $K\rightarrow +\infty$. Let $\bar{r}$ be this limit, then
\be
A\phi(u)=\bar{r}\Big(\ind_{u=a}\, q_a \big(\phi(A)-\phi(a)\big)+\ind_{u=A}\, q_A \big(\phi(a)-\phi(A)\big)\Big).\label{A:bernoulli}
\ee
\end{itemize}
\end{rem}
\medskip

\noindent We see that the ratio between the two mutation probabilities $r_K=q_K/p_K$ that allows convergence is highly dependent on the mutation distribution.\\

\bigskip
\noindent Note that since the demographic rates do not depend on the marker,
  the dynamics of the population distribution of the trait is independent of the marker distribution. But the dynamics of the marker distribution cannot be separated from the trait distribution as we shall see.


\bigskip
\noindent The process $(\nu^{K}_t,t\geq 0)$ is a càdlàg ${\cal M}_F(\X\times \U)$-valued Markov process.
Existence and uniqueness in law of the process can be adapted from \cite{fourniermeleard,champagnatferrieremeleard2} under the assumption that $\mathbb{E}(\langle \nu^{K}_0,\mathbf{1}\rangle)< + \infty$.\\
Moreover, Assumption \eqref{hyp-competition} allows to prove as  in Champagnat \cite[Lemma 1]{champagnat06} that if
 for $p\geq 1$, $\sup_{K\in \N^*}\E(\langle \nu^K_0,\mathbf{1}\rangle^p)<+\infty$, then
\begin{equation}
\sup_{t\in \R_+, K\in \N^*} \E\big((\langle \nu^K_t,\mathbf{1}\rangle)^p\big)<+\infty\label{estimeemoment}
\end{equation}which will be useful to study the tightness and convergence of the sequence.

\section{Convergence to the Substitution Fleming-Viot Process}\label{section:theorem}

\medskip
\noindent The adaptive trait mutation time scale is the slowest, equal to ${1\over K p_{K}}=K$ by Assumptions \eqref{echelle}. It scales the evolutionary time. So we shall consider the limiting behavior of $(\nu^K_{Kt}, t\geq 0)$. We will see in section \ref{sec:MarkDistrib} that $p_K$ of order $1/K^2$ is the only choice which leads to a non-trivial or non-degenerate marker dynamics.

\me
Before stating our main result, we introduce several important ingredients which are used to describe the limit of $(\nu^K_{Kt}, t\geq 0)$ when $K\rightarrow +\infty$. We conclude the section with extensions and simulations.

\subsection{Invasion fitness function}

The large population behavior of the process $(\nu^K_t, t\geq 0)$ as $K$ tends to infinity, can be studied by classical arguments and is given in the appendix. At the ecological time scale (of order $1$), no mutation occurs in the asymptotic $K\to +\infty$. If the initial population has a single adaptive  trait $x$, then, in the limit $K\rightarrow +\infty$, the trait distribution remains $\delta_x$ since $p_K$ and $q_K$ vanish in the limit. The rescaled population size process $( N^K_{t}/K, t\geq 0)$ converges to the solution $(n_{t}, t\geq 0)$ of the ordinary differential equation
\begin{equation}
\frac{dn_t}{dt} =\big(b(x)-d(x)-\eta(x) C(0)n_t\big) n_t \label{eq1:logistique}
\end{equation}
which converges when $t$ tends to infinity to the equilibrium
\begin{equation}
\widehat{n}_x =\frac{
b(x)-d(x)}{\eta(x)C(0)}.\label{nchap}
\end{equation}

\me
Conversely, at the adaptive trait-mutation time scale $Kt$, new mutant traits can invade. If they replace the previous traits, then the corresponding event is called ``fixation''.\\
The probability of fixation of a mutant trait $y$ in a trait resident population $x$ at equilibrium depends on the invasion fitness function $f(y;x)$:
\begin{equation}
f(y;x)= b(y)-d(y)-\eta(y)\,C(y-x)\,\widehat{n}_x.\label{def:fitness}
\end{equation}
This fitness function describes the initial growth of the mutant population. It does not depend on the neutral marker.\\

\noindent By simplicity we work under the assumption of `invasion implies fixation', but this assumption will be relaxed in Section \ref{section:coexistence}. When a mutant trait appears, either its line of descent replaces the resident population or it disappears. As a consequence, two traits can not coexist in the long term.
\begin{hyp}[``Invasion implies fixation"]
\label{IIF}For all $x\in \X$ and for almost every $y\in \X$,
\begin{eqnarray*}
&\mbox{either} &\frac{b(y) - d(y)}{\eta(y) C(y-x)} <\frac{b(x) - d(x)}{\eta(x)C(0)},\\
&\mbox{or}&
\frac{b(y) - d(y)}{\eta(y)C(y-x)}> \frac{b(x) - d(x)}{\eta(x)C(0)}\ \mbox{ and }\
\frac{b(x) - d(x)}{\eta(x)C(x-y)}<\frac{b(y) - d(y)}{\eta(y)C(0)}.
\end{eqnarray*}
\end{hyp}

\begin{rem}In the case of logistic populations with $C\equiv 1$, this assumption is satisfied as soon as $x \mapsto \widehat{n}_x$ is strictly monotonous.
\end{rem}




\subsection{Main theorem}

\bigskip \noindent
Let us first give  the definition of the Fleming-Viot process which will appear in our setting (see e.g. Dawson and Hochberg \cite{dawsonhochberg}, Dawson \cite{dawson}, Donnelly and Kurz \cite{donnellykurtz}, Etheridge \cite{etheridgebook}). 
We recall that the operator $A$ has been introduced in \eqref{hyp-D}.

\me In the sequel,  we denote by $\mathcal{P}(\U)$ and $\mathcal{P}(\X \times \U)$ the probability measure spaces respectively on $\U$ and on $\X \times \U$.

\begin{defi}\label{def:FV} Let us fix $x\in {\cal X}$ and $u \in {\cal U}$.
The Fleming-Viot process $(F_{t}^{u}(x,.), t\geq 0)$ indexed by $x$, started at time $0$ with initial condition $\delta_{u}$ and associated with the mutation operator $A$ is the  ${\cal P}({\cal U})$-valued process whose law is characterized as the unique solution of the following martingale problem. For any $\phi \in {\cal D}(A)$,
\be
M^x_{t}(\phi)
= \langle F^{u}_{t}(x,.),\phi\rangle -  \phi(u) - b(x) \int_{0}^t  \langle F^{u}_{s}(x,.),  A\phi\rangle ds\label{PBM-FV}
\ee
is a continuous square integrable  martingale with quadratic variation process
\begin{align}
\langle M^x(\phi)\rangle_{t}=  &\frac{b(x)+d(x)+\eta(x) C(0) \widehat{n}_x} {\widehat n_{x}}
\int_{0}^t  \left(\langle F^{u}_{s}(x,.), \phi^2\rangle -  \langle F^{u}_{s}(x,.), \phi\rangle^2 \right)ds\nonumber\\
= &\frac{2b(x)} {\widehat{n}_x}
\int_{0}^t  \left(\langle F^{u}_{s}(x,.), \phi^2\rangle -  \langle F^{u}_{s}(x,.), \phi\rangle^2 \right)ds.\label{crochet-PMB-FV}
\end{align}
\end{defi}


\bi
Let us now state our main theorem that describes the slow-fast dynamics of adaptive traits  and neutral markers at the (trait) evolutionary time scale.

\begin{thm}
  \label{thm:SFVP}
We work under Assumptions
  \ref{hypo} and \ref{IIF}. The initial conditions are\\
   $\nu^K_{0}(dy,dv)= n^K_{0}\,\delta_{(x_{0},u_{0})}(dy,dv)$ with
$\ \lim_{K\to \infty} n^K_{0} = \widehat n_{x_{0}}$ and $\sup_{K\in \N^*} \E((n^K_0)^3)<+\infty$.

\me
Then, the population process $\ (\nu^K_{Kt}, t\geq 0)$ converges in law to the $\mathcal{M}_{F}({\cal X}\times {\cal U}))$-valued process  $\,(V_t(dy,dv), t\geq 0)$ defined by
\be
\label{v}
V_{t}(dy, dv) =  \widehat{n}_{Y_{t}}\,\delta_{Y_{t}}(dy)\, F_{t}^{U_{t}}(Y_{t}, dv),
\ee
where the process $((Y_{t}, U_{t}), t\geq 0)$ on ${\cal X}\times {\cal U}$, started at $(x_0,u_0)$, jumps from $(x,u)$ to $(x+k,v)$ with the jump measure
  \begin{equation}
    \label{taux}
    b(x) \widehat n_{x}\frac{[f(x+k;x)]_+}
    {b(x+k)}\, F^{u}_t(x,dv)\,m(x,k)dk.
  \end{equation}

  \me
The convergence holds in the sense of finite dimensional distributions on $\mathcal{M}_{F}({\cal X}\times {\cal U})$.

\me  In addition, the convergence also holds in the sense of occupation measures, i.e. the measure $\nu^K_{Kt}(dy,dv) dt$ on $\X\times \U\times [0,T]$ converges weakly to the measure $\widehat{n}_{Y_{t}}\,\delta_{Y_{t}}(dy)\, F_{t}^{U_{t}}(Y_{t}, dv) dt$ for any $T>0$.\hfill $\Box$
\end{thm}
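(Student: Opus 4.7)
The plan is to follow the structure suggested by the three time scales: ecological birth/death (fast), marker mutation (intermediate), and trait mutation (slow, setting the rescaling $Kt$). I would first write the standard semi-martingale decomposition of $\langle \nu^K_{Kt}, \phi \rangle$ for test functions $\phi(y,v)$ on $\X\times\U$, separating births without mutation, births with trait or marker mutation, deaths and competition, and establish tightness of $(\nu^K_{Kt})$ in the occupation-measure topology on $\X\times\U\times[0,T]$, using the uniform moment bound \eqref{estimeemoment} and Assumption~\ref{hyp-competition}.

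Since the demographic rates depend only on the trait, the trait marginal $X^K_{Kt}$ is autonomous and reduces to Champagnat's setting. I would revisit his proof of the TSS limit and refine the convergence statement: on the ``slow'' time scale, each successful sweep looks instantaneous, so the natural topology is the M1 Skorokhod topology used by Collet--Martinez--M\'el\'eard, which accommodates occupation-measure convergence despite the short but non-trivial internal dynamics of each sweep. The limiting trait process $Y_t$ jumps from $x$ to $x+k$ at rate $b(x)\widehat n_x[f(x+k;x)]_+m(x,k)/b(x+k)\,dk$, matching the trait-marginal of \eqref{taux}.

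The most delicate step is the marker analysis. During a sweep, the invading mutant line of descent is sandwiched between two linear branching processes with birth rate $b(x+k)$ and death rate $d(x+k)+\eta(x+k)C(k)\widehat n_x$ until it reaches a macroscopic fraction of the population, after which a logistic competition argument concludes fixation; the total sweep duration is of order $\log K$ on the original time scale, hence $o(1)$ on the rescaled scale. Along each lineage in this branching tree the expected number of marker mutations is $O(b q_K\log K)=O(r_K\log K/K^2)$, which tends to $0$ under \eqref{echelle}, so with high probability the marker distribution of the invading clone stays concentrated at $\delta_u$; this yields the genetical bottleneck and explains the factor $F^u_t(x,\cdot)$ in the jump measure \eqref{taux}. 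Between two successive sweeps the trait is essentially monomorphic at some value $x$ and the population size remains close to $K\widehat n_x$ for a duration of order $K$; the rescaled marker-mutation operator $(r_K/K)\int(\phi(u+h)-\phi(u))G_K(u,dh)$ converges to $A\phi$ by \eqref{hyp-D}, and a Kurtz-type averaging of the fast ecological dynamics reproduces the Fleming-Viot drift $b(x)\langle F,A\phi\rangle$ and quadratic variation $\frac{2b(x)}{\widehat n_x}(\langle F,\phi^2\rangle-\langle F,\phi\rangle^2)$ of Definition~\ref{def:FV}.

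Concatenating these three pieces by applying the strong Markov property at the (asymptotically discrete) TSS jump times yields convergence of the finite-dimensional distributions to the process $V_t$ defined by \eqref{v}, and the tightness of Step~1 upgrades this to convergence of occupation measures. The main obstacle is Step~3: one needs sharp branching-process estimates not only for the size of the invading subpopulation but also for the empirical distribution of its markers, uniformly over the $O(1)$ successful and $o(K)$ failed invasions that may occur in a bounded interval of rescaled time. Careful coupling of the individual-based process with dominating/dominated branching processes, together with a precise control of the number of marker mutations along lineages, is what allows the Dirac concentration to survive in the limiting SFVP.
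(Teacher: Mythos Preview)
Your proposal is correct and follows essentially the same strategy as the paper: semimartingale decomposition, TSS convergence of the trait marginal via the $M_1$ topology and occupation measures, a hitchhiking/bottleneck estimate during sweeps, Fleming--Viot convergence of the marker between sweeps, and concatenation at the stopping times $\tau^K_k,\theta^K_k$. The only notable difference is that the paper makes the hitchhiking step rigorous through the semimartingale decomposition of $\langle \pi^K_t(y,\cdot),g\rangle$ itself---splitting $[0,t_K]$ into $[0,s_K]$ (where the mutant mass may still be $O(1/K)$ but the variance of $g$ under $\pi^K_s$ is controlled by the per-lineage mutation count $s\,q_K$) and $[s_K,t_K]$ (where the mutant mass is already macroscopic)---rather than by a direct branching-tree coupling, but your per-lineage heuristic is precisely the ingredient that drives that estimate.
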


\begin{defi}
The limiting measure-valued process $( V_t(dy,dv), t\geq 0)$ is called Substitution Fleming-Viot Process. It generalizes the Trait Substitution Sequence (TSS) introduced by Metz et al. \cite{metzgeritzmeszenajacobsheerwaarden}.
\end{defi}

\noindent We observe that the Substitution Fleming-Viot Process includes the three qualitative behaviors due to the three different time scales: deterministic equilibrium for the transitory size of the population (driven by the ecological birth and death events), transitory diffusive behavior  for the marker distribution (driven by marker mutation), jump process  for the trait distribution (driven by adaptive trait mutation).

\bi
\begin{rem} Equations \eqref{PBM-FV}-\eqref{crochet-PMB-FV} have important biological implications regarding neutral genetic diversity. Once the fixation of a favorable mutation has occurred and the population is monomorphic for the selected trait, the evolution of the neutral marker distribution is described by a Fleming-Viot process whose law is given by the martingale in \eqref{PBM-FV}. The bracket of the martingale in  \eqref{crochet-PMB-FV} shows that the stochastic fluctuations with time of the marker distribution are due to randomness in births and deaths and mutations. The multiplicative factor $2b(x)/\widehat{n}_x$ in  \eqref{crochet-PMB-FV} depends on the trait value $x$, and on the assumed ecological model which determines the relationships between $x$, the death and birth rates and the competition kernel. Notice that $2b(x)/\widehat{n}_x$ corresponds to the quotient of variance (here $2b(x)$) divided by effective size $N_e$ (here $\widehat{n}_x$) that appears in the usual Wright Fisher equation. The quantity $\widehat{n}_x$ corresponds to the mass of the population when there is an infinite number of small individuals; if the size of the population is of order $K$, it means that there is approximately $n_x K$ individuals of weights $1/K$.
The right term in  \eqref{PBM-FV}, (i.e. the drift term in a mathematical sense) involves the generator $A$ and is associated with the mutation model as seen in Assumption \eqref{hyp-D}. The generator $A$ describes the speed at which the neutral diversity is restored. For instance in a continuous state model, if $A\phi=\frac{\sigma^2}{2}\phi''$, we recover the heat equation whose solutions have a variance in $t$. In a discrete state model similar to \eqref{A:bernoulli}, this equation gives the growth of the support.\\
In short, \eqref{PBM-FV}-\eqref{crochet-PMB-FV} shows that the distribution of the neutral marker depends on ecological processes and their parameters: every changes in $x$ will result in changes in the distribution of the neutral marker, through changes in birth, death and mutation rates, in competition and equilibrium population size. This result is biologically relevant and important since it differs from the assumptions of classical genetic hitchhiking models, in which selection and population size remain constant, leading to the fact that the neutral diversity restoration will not depend on the trait substitution and its history. In examples below, we will give more detailed results regarding about the distribution of the neutral marker changes.
\end{rem}

\bi
The proof of Theorem \ref{thm:SFVP} is the subject of Section \ref{section:proof}.

\me
The trait dynamics in the limit of Theorem \ref{thm:SFVP} is the Trait Substitution Sequence obtained in Champagnat \cite[Theorem 1]{champagnat06} whose assumptions are satisfied. Our main contribution in Theorem \ref{thm:SFVP} is to prove that at the adaptive trait mutation time scale, a homogeneization phenomenon takes place. There is a deterministic limit for the fastest process (the births and deaths leading to $\widehat{n}_x$), and stochastic limits for the two slower processes. The limiting process $(V_t, t\geq 0)$  is a measure-valued process with jumps (corresponding to trait mutations) and diffusion (corresponding to marker dynamics). If the population is trait-monomorphic with trait $x$, the jump rate is
$$    b(x) \widehat n_{x}\int_{\X-\{x\}} \frac{[f(x+k;x)]_+} {b(x+k)}\,m(x,k)dk.
$$When a jump occurs at $t$, the process jumps from $(x,u)$ to $(x+k,v)$ where $k$ is chosen in $m(x,k)dk$ and $v$ is chosen  at time $t$ in the marker distribution $F^u_t(x,dv)$.

\me
The marker distribution is the second fastest-evolving component, but marker mutations are assumed small \eqref{hyp-D}, allowing to recover a non-degenerate
 Fleming-Viot superprocess parameterized by the trait of the population but with jumps. Between the jumps, this superprocess is the pathwise limit of the marker dynamics where traits are fixed. The jumps are hitchhiking events due to the trait mutations (see in another context Etheridge, Pfaffelhuber and Wakolbinger \cite{etheridgepfaffelhuberwakolbinger}).  There is a bottleneck at each successful invasion-fixation of mutant traits. Indeed, the individuals present at the fixation time are all descendants of the successful initial mutant. The trait and marker of the latter alone determine the state of the new mutant population, hence creating the bottleneck for the whole population genealogy. This result is biologically intuitive since we assume that the neutral marker and the trait are completely linked, but the mathematical proof of these phenomena is the hardest part of the proof of Theorem \ref{thm:SFVP}, and we will show that our results still have biological interest. Extending this model to the case of recombination is a challenging problem for future work (see \cite{smadi} in this direction).\\
 It is also worth to notice that contrarily to other extensions of the TSS (e.g. the TSS with age-structure of \cite{meleardtran} or the Polymorphic Evolution Sequence for a multi-resource chemostat in \cite{champagnatjabinmeleard}) that usually jump from an equilibrium to another equilibrium, the marker distribution is here described by a stochastic process and not an equilibrium measure. This is due to the fact that the time scales of the trait and marker mutations are assumed different: in the time scale of marker mutations, the trait mutations are too rare and not seen.

An illustration of the invasion and fixation phenomena is summed up in Fig. \ref{fig:inv-fix}.

  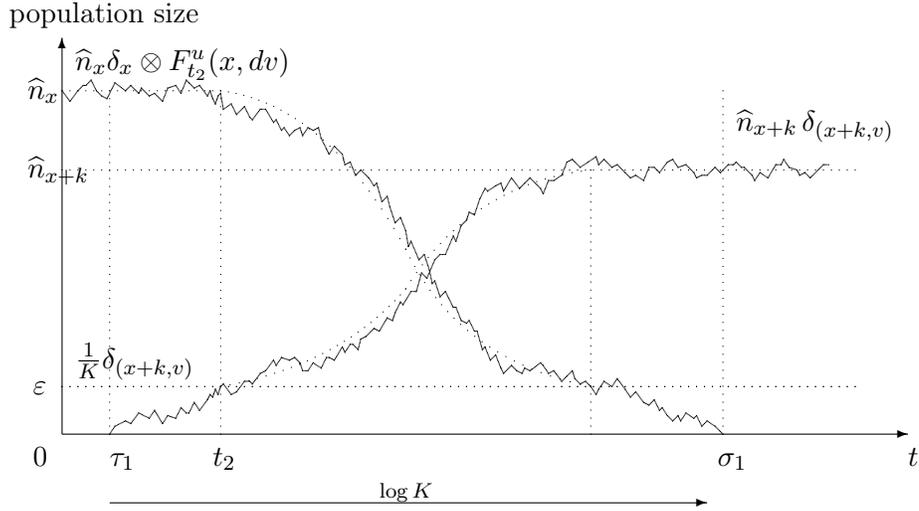
\begin{figure}[ht]
  \begin{center}
    \begin{picture}(350,180)(-20,-10)
      \put(0,0){\vector(1,0){320}} \put(0,0){\vector(0,1){150}}
      \put(-11,-12){0}
      \put(-11,15){$\varepsilon$}
      \put(-13,97){$\widehat{n}_{x+k}$}
      \put(-13,127){$\widehat{n}_x$}
      \put(5,138){$\widehat{n}_x \delta_x \otimes F^u_{t_2}(x,dv)$}
      \put(5,25){$\frac{1}{K}\delta_{(x+k,v)}$}
      \put(-20,156){population size}
      \put(57,-12){$t_2$}
      \put(18,-26){\vector(1,0){226}}
      \put(120,-24){{\scriptsize $\log K$}}
      \put(248,-12){$\sigma_1$}
      \put(320,-12){$t$}
      \put(18,-12){$\tau_1$}
      \put(255,115){$\widehat{n}_{x+k}\, \delta_{(x+k,v)}$}
      \dottedline{3}(0,18)(300,18)
      \dottedline{3}(18,0)(18,130)
      \dottedline{3}(0,100)(300,100)
      \dottedline{3}(0,130)(60,130)
      \dottedline{3}(60,0)(60,130) \dottedline{3}(200,0)(200,100)
      \dottedline{3}(0,18)(300,18)
      \dottedline{3}(250,0)(250,130)
      \qbezier[27](60,130)(100,125)(130,70)
      \qbezier[25](130,70)(150,28)(200,18)
      \qbezier[25](60,18)(105,26)(130,55)
      \qbezier[25](130,55)(155,95)(200,100)
      \dottedline{0.5}(18,0)%
(20,3)(24,5)(26,4)(29,8)(33,6)(35,9)(38,4)(40,7)(43,6)(45,10)(47,8)(48,8)%
(50,12)(52,9)(55,13)(56,15)(57,14)(58,17)(59,15)(60,18)(61,19)(64,15)%
(66,16)(69,20)(71,20)(72,19)(75,23)(77,24)(80,28)(81,26)(83,29)(86,28)%
(88,29)(90,27)(91,24)(93,24)(95,27)(98,25)(100,29)(101,29)(103,28)(106,33)%
(107,31)(109,34)(110,34)(112,32)(113,36)(115,37)(118,40)(120,39)(123,44)%
(125,43)(126,46)(128,50)(129,49)(132,54)(134,54)(136,61)(138,59)(141,66)%
(143,68)(145,68)(148,75)(149,73)(151,79)(153,81)(154,84)(156,83)(158,89)%
(160,93)(161,92)(164,94)(166,95)(167,92)(170,97)(173,93)(175,95)(177,97)%
(181,93)(182,91)(184,96)(186,96)(189,98)(191,103)(193,104)(196,101)(199,103)%
(202,105)(203,102)(205,104)(208,101)(210,102)(213,99)(215,97)(217,98)%
(220,96)(222,101)(224,101)(226,99)(229,103)(231,104)(233,101)(236,102)%
(239,99)(240,97)(242,98)(244,98)(247,101)(249,99)(252,102)(254,102)(255,103)%
(257,101)(258,101)(261,98)(262,96)(264,99)(267,97)(270,102)(272,102)%
(273,104)(275,101)(278,100)(279,98)(281,99)(283,97)(288,102)(290,102)
      \dottedline{0.5}(0,130)(3,126)(6,130)(8,132)(9,132)(11,134)(13,130)%
(15,128)(17,127)(20,133)(24,130)(26,132)(29,129)(30,131)(33,128)(35,129)%
(38,126)(40,131)(43,132)(45,129)(47,134)(48,133)(50,131)(52,132)(55,128)%
(56,130)(57,127)(58,125)(59,128)(60,129)(61,123)(64,125)(66,121)(69,124)%
(71,120)(72,119)(75,123)(77,123)(80,118)(81,119)(83,115)(84,116)(86,113)%
(88,115)(90,116)(91,113)(93,116)(95,115)(97,116)(98,112)(100,108)(101,106)%
(103,109)(106,106)(107,102)(109,103)(111,98)(113,100)(116,97)(118,94)%
(119,95)(121,88)(123,90)(124,86)(126,80)(128,82)(130,77)(131,71)(132,73)%
(135,66)(137,68)(139,62)(140,57)(141,58)(143,52)(145,54)(148,48)(149,48)%
(151,43)(153,42)(154,43)(156,39)(157,39)(159,32)(160,33)(162,28)(164,30)%
(166,25)(167,23)(170,26)(172,24)(174,23)(175,25)(177,24)(181,27)(182,26)%
(184,23)(186,24)(189,20)(191,22)(193,24)(194,21)(196,19)(198,20)(200,18)%
(202,15)(203,17)(205,20)(208,18)(210,21)(213,17)(215,14)(217,16)(220,12)%
(222,11)(224,13)(226,9)(227,10)(229,7)(231,5)(233,7)(234,5)(236,8)%
(239,6)(240,7)(242,3)(244,5)(247,3)(249,1)(250,0)
    \end{picture}
  \end{center}
  \caption{{\small Invasion and fixation of a successful trait mutant. In the population of resident trait $x$ and marker distribution $F_t^u(x,dv)$, a mutant trait $x+k$ appears at time $\tau_1$. Let $v$ be the marker of the mutant individual. As in Champagnat et al. \cite{champagnatferrieremeleard2}, the fluctuations of the resident population can be neglected in first approximation and the mutant population evolves as a birth and death process with rates $b(x+k)$ and $d(x+k)+\eta(x+k)C(k)\widehat{n}_x$, independent of the marker distribution. When the mutant population reaches a sufficient size $\varepsilon$ at time $t_2$, with probability $[f(x+k,x)]_+/b(x+k)$, the `invasion implies fixation' assumption leads to the replacement of the former population in a time $t_K$ such that $t_K/\log(K)\rightarrow\infty$. This time interval is too short to allow other marker mutant to appear in non-negligible proportion, with large probability. Thus, when the mutant population has fixed, at time $\sigma_1$, it is close to $\widehat{n}_{x+k} \delta_{(x+k,v)}$. Before the next adaptive trait mutation occurs, the marker mutates a lot, since marker mutations happen on a faster scale. The dynamics of the marker distribution is then the one of a Fleming-Viot superprocess started at $\delta_v$ and with statistics depending on $x+k$.}}
  \label{fig:inv-fix}
\end{figure}

\subsection{An example from Dieckmann and Doebeli}\label{section:DD}

Let us first illustrate our model by simulations based on an example inspired from Roughgarden \cite{roughgarden} and  Dieckmann and Doebeli \cite{dieckmanndoebeli}. Here $\X=[-1,1]$, $\U=[-2,2]$ and $K=1000$ in all the simulations. The individual dynamics is characterized by
\begin{itemize}
   \item the birth rate $b(x)=\exp(-x^2/2\sigma_b^2)$ with $\sigma_b = 0.9$. The probability of mutation of the trait and  marker are respectively $p_{K}=1/K^2$ and $q_{K}=1/\sqrt{K}$. The adaptive trait mutation kernel $m(x,k) dk$ is a Gaussian law with mean 0 and variance 0.1, conditioned to $[-1,1]$. The marker mutation kernel  $G_K(u,dh)$  is a Gaussian law with mean 0 and variance $\sigma^2_K=1/\sqrt{K}$, conditioned to $[-2,2]$.
   \item symmetric competition for resources, with $ \eta(x)=1$ and
   $C(x-y)=\exp(-(x-y)^2/2\sigma^2_C)$, $\sigma_C = 0.8$.
  \end{itemize}
Here, the `optimal trait' is $x=0$ where the birth rate has its maximum and the population is governed by local competition.
We start with the initial condition: $\ x_{0}= -1, u_{0}= 0$.\\

\begin{figure}[!ht]\label{fig:WF}
\begin{center}
\begin{tabular}{cc}
  (a) & (b) \\
& \\
\hspace{0cm}\includegraphics[width=4cm,angle=0,trim= 6cm 6cm 6cm 6cm]{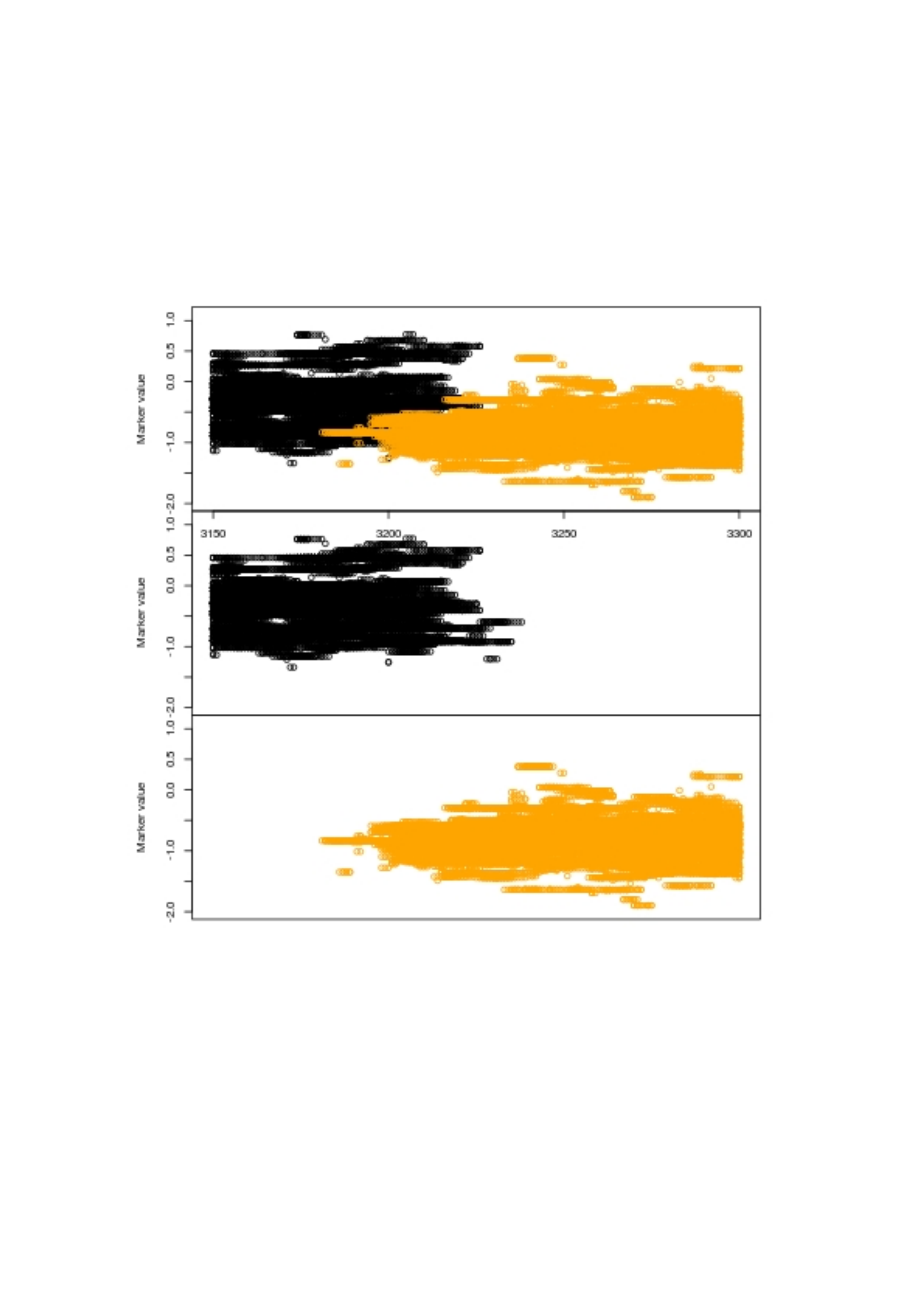}
&
\hspace{2cm} \includegraphics[width=4cm,angle=270,trim= 24.5cm 6cm -16.5cm 4cm]{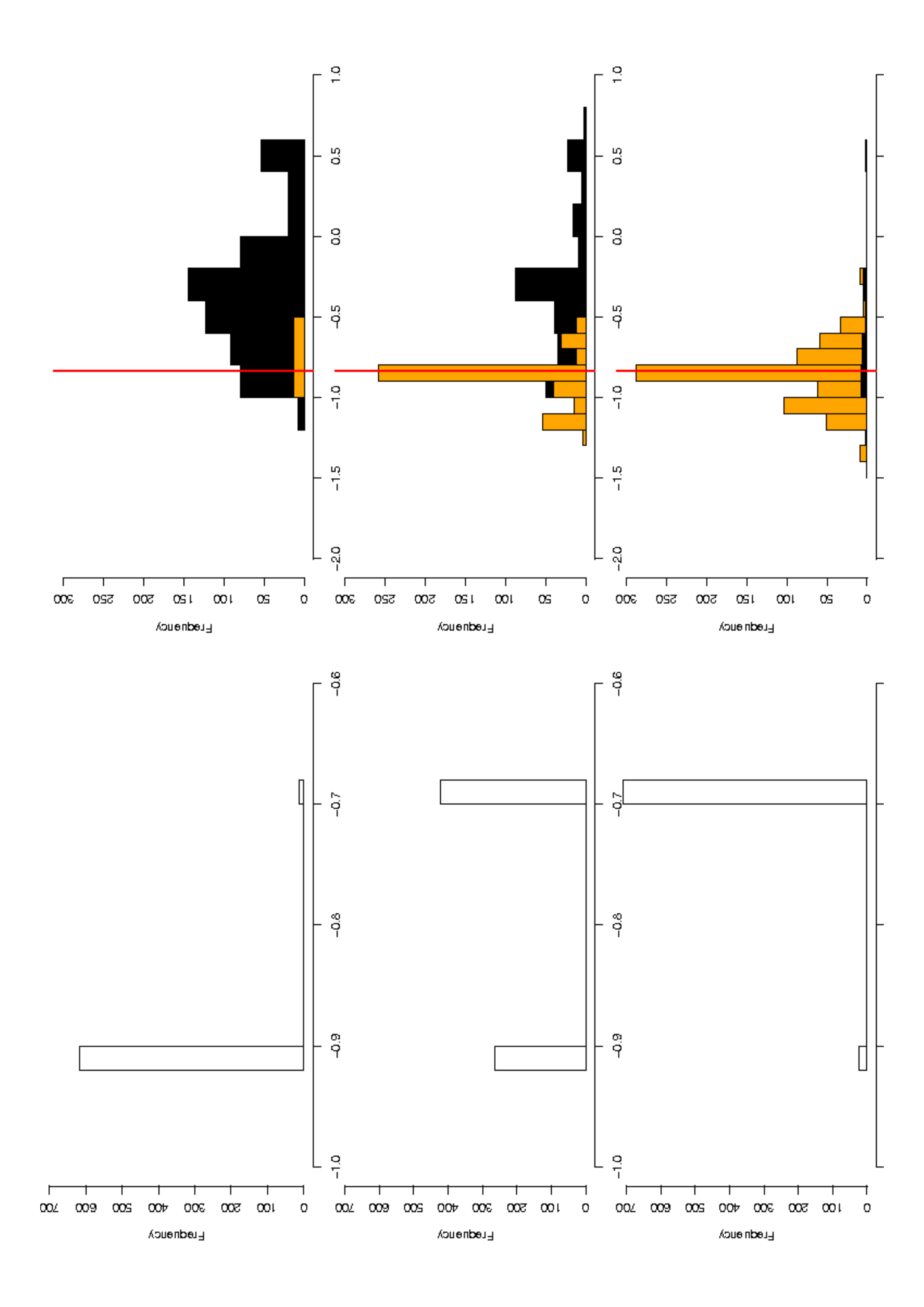}
\end{tabular}
\vspace{-5cm}
\caption{\textit{We consider a resident trait-monomorphic population (black) in which a mutant trait (light) appears and goes to fixation. Here the intrinsic death rate $d(x)=0$. (a) Evolution of the support of the marker distribution with respect to time; the support of the resident trait-monomorphic population is in black while the support of the mutant population is lighter. The mutant and resident populations are shown separately and together. (b) Distributions of the traits (left) and markers (right) in the population at three times during the invasion. The marker and trait values are in abscissa and frequency is in ordinate. The marker value of the initial mutant is indicated by the red line. When the mutant trait appears, the resident population is quickly invaded by the mutant population during a transition period. In (b), we can see that if the support of the marker distribution for the resident population remains wide (see also (a)), the size of the resident population decreases quickly. In the second column of (b), we see that the marker distribution in the mutant population remains spiked at the marker value of the first mutant individual during the whole transition period. After invasion (see (a)), the spread of the marker distribution follows the Fleming-Viot process \eqref{PBM-FV}-\eqref{crochet-PMB-FV}. On (a), we see that for the Fleming-Viot process, the support of the marker distribution spreads slowly.}}\label{simu1}
\end{center}
\end{figure}

\noindent The simulations (see Fig. \ref{simu1}) illustrate Theorem \ref{thm:SFVP}. They show the replacement of a resident population by a mutant population.
In Fig. \ref{simu1} (a), the dynamics of the support of the marker distribution is represented. The mutant and resident populations are pictured together and separately to better observe the extinction of the resident population (black) and the expansion of the mutant population from one individual (light). The invasion started around time 3175 is quick and after time 3250, the mutant population has totally replaced the resident one.\\
In Fig. \ref{simu1} (b), the histograms of traits and markers at three times during the invasion are represented simultaneously, to underline the hitchhiking effect of the marker during the `invasion implies fixation' phase. We can see that the distribution of the marker during the fixation remains close to a Dirac mass at the marker value of the initial mutant (red line). This illustrates the bottleneck phenomenon, the existence of which we prove rigorously (Equation \eqref{distrib-marker} of Proposition \ref{prop:hitchhiking}).



\me
Let now focus on the Dieckmann-Doebeli's example and highlight the biological implications regarding the eco-evolutionary feedback on the distribution of the neutral marker. Here, $\widehat{n}_x=b(x)-d(x)$ and therefore the Fleming-Viot process $F^u_t(x,.)$ is the solution of the martingale problem \eqref{PBM-FV} with $A\phi=\frac{\sigma^2}{2}\phi''$ and with bracket \eqref{crochet-PMB-FV} given for all $\phi\in \Co(\U,\R)$ by $\
2\frac{b(x)}{b(x)-d(x)}\int_0^t \big(\langle F^u_s(x,.),\phi^2\rangle -\langle F^u_s(x,.),\phi\rangle^2\big)ds.$ \\
If the death rate is a constant $d(x)=d$, then the multiplicative factor in the bracket \eqref{crochet-PMB-FV}, $b(x)/(b(x)-d)$, decreases when $b(x)$ increases. 
Heuristically we expect that the stochastic fluctuations in time of the distribution of the neutral marker decrease when the trait $x$ approaches the evolutionary stable strategy (ESS, see \cite{maynardsmith}) and $b(x)$ increases, since the equilibrium size is greater and the diffusion coefficient is lower. The drift term is $b(x)\frac{\sigma^2}{2}\int_0^t \langle F^u_s(x,.),\phi''\rangle ds$ and thus the multiplicative factor $b(x)$ increases when approaching the ESS, contrarily to the multiplicative factor of the bracket \eqref{crochet-PMB-FV}. In the case $d\equiv 0$, the Fleming-Viot process has a constant diffusion coefficient and the bracket \eqref{crochet-PMB-FV} does not depend on $x$. The Fleming-Viot process depends only on the trait $x$ through the drift term. Notice that this is true for any mutation model satisfying \eqref{hyp-D}. This simple result illustrates how the ecological processes can shape the neutral diversity.


\subsection{Corollary: the Wright-Fisher Evolutionary Process}\label{section:WF}

There exists a version of the SFVP in the case when the marker space $\U$ is discrete. Assume for instance that there exist only two alleles of the marker trait, denoted by $a$ and $A$, so that $\U=\{a,A\}$. In this case, we apply Theorem \ref{thm:SFVP} with the mutation kernel $G_K$ defined in \eqref{DK:bernoulli} and $r_K/K\rightarrow \bar{r}>0$ when $K\rightarrow +\infty$.

\begin{prop}\label{prop:SFVP-WF}
We work under Assumptions \ref{hypo} and \ref{IIF} with probabilities $q_A$ and $q_a$ to mutate from marker $A$ to marker $a$ and from marker $a$ to marker $A$. Moreover, we consider similar initial conditions $\nu^K_{0}$ as in Theorem \ref{thm:SFVP}.
Then, the population process $\ (\nu^K_{Kt}, t\geq 0)$ converges in law to the $\mathcal{M}_{F}({\cal X}\times \{a,A\})$-valued process
$$( \widehat{n}_{Y_{t}}\,\big(W_t^a\  \delta_{(Y_{t},a)}(dy,du)+(1-W_t^a)\ \delta_{(Y_t,A)}(dy,du)\big), t\geq 0),$$where  $(Y_{t}, t\geq 0)$ is the TSS process that jumps from $x$ to $x+k$ in $\X$ with the jump measure $\,b(x)\, \widehat n_{x}\,\frac{[f(x+k;x)]_+}{b(x+k)}\,m(x,k)dk\ $ and where $(W_t^a, t\geq 0)$ is the following Wright-Fisher jump process that represents the proportion of alleles $a$ in the population of trait $Y_t$ at time $t$. Between jumps, it satisfies the usual Wright-Fisher equation with mutations
    \begin{equation}\label{WF}
    dW^a_t= \bar{r}\, b(Y_t) \big( q_A(1-W^a_t) - q_a W^a_t \big) dt + \sqrt{\frac{2 b(Y_t)}{\widehat{n}_{Y_t}}\ W^a_t\ \big(1-W^a_t\big)} dB_t
    \end{equation}$(B_t, t\geq 0)$ being a standard Brownian motion. It jumps with the TSS and at jump time $t$, the process $(W^a_t,1-W^a_t)$ goes to $(1,0)$ with probability $W^a_t$ and to $(0,1)$ with probability $1-W^a_t$.
\hfill $\Box$
\end{prop}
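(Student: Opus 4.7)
The plan is to apply Theorem \ref{thm:SFVP} to the discrete mutation kernel \eqref{DK:bernoulli} and to read off the dynamics of the Fleming-Viot component in the two-point setting. First I would check that Assumption \ref{hypo} is satisfied: condition \eqref{hyp-D} is exactly the content of \eqref{A:bernoulli} under the hypothesis $r_K/K \to \bar{r}$, with $A$ the bounded generator of the two-state Markov chain on $\{a,A\}$ with transition rates $\bar{r}q_a$ out of $a$ and $\bar{r}q_A$ out of $A$. Assumption \ref{IIF} is a hypothesis of the proposition. Theorem \ref{thm:SFVP} therefore yields the convergence of $(\nu^K_{Kt}, t\ge 0)$ to the SFVP \eqref{v} driven by the TSS $(Y_t)$ with the stated jump kernel and by the Fleming-Viot component $F_t^{U_t}(Y_t,\cdot)$.

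Since $\U=\{a,A\}$, every element of $\mathcal{P}(\U)$ is of the form $W^a\delta_a + (1-W^a)\delta_A$, so the Fleming-Viot process is entirely encoded by the real-valued process $W^a_t := F_t^{U_t}(Y_t,\{a\})$. Between two jumps of $(Y_t)$, with $Y_t\equiv x$ fixed, I would extract the dynamics of $W^a_t$ by testing the martingale problem \eqref{PBM-FV}--\eqref{crochet-PMB-FV} against $\phi=\mathbf{1}_{\{a\}}$. A direct computation using \eqref{A:bernoulli} gives $A\phi(a)=-\bar{r}q_a$ and $A\phi(A)=\bar{r}q_A$, so
\begin{equation*}
\langle F^{u}_{s}(x,\cdot), A\phi\rangle = \bar{r}\bigl(q_A(1-W^a_s) - q_a W^a_s\bigr),
\end{equation*}
while $\phi^2=\phi$ implies
\begin{equation*}
\langle F^{u}_{s}(x,\cdot), \phi^2\rangle - \langle F^{u}_{s}(x,\cdot), \phi\rangle^2 = W^a_s(1-W^a_s).
\end{equation*}
Therefore $W^a_t - \int_0^t \bar{r}\,b(x)\bigl(q_A(1-W^a_s)-q_a W^a_s\bigr)\,ds$ is a continuous square integrable martingale with quadratic variation $\int_0^t \tfrac{2b(x)}{\widehat{n}_x} W^a_s(1-W^a_s)\,ds$.

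By the standard one-dimensional martingale representation theorem (enlarging the probability space if necessary), one recovers a Brownian motion $(B_t)$ such that $(W^a_t)$ solves the SDE \eqref{WF} between the jumps of $(Y_t)$. Pathwise uniqueness for this Wright-Fisher SDE, with Lipschitz drift and square-root diffusion coefficient on $[0,1]$, is classical via Yamada-Watanabe, so its law is entirely determined.

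It remains to identify the jumps of $(W^a_t)$. At a jump time of the SFVP from $(x,u)$ to $(x+k,v)$, formula \eqref{taux} shows that the new marker $v$ is sampled according to the marker distribution $F^{u}_t(x,\cdot)=W^a_{t-}\delta_a+(1-W^a_{t-})\delta_A$ just before the jump; the Fleming-Viot component then restarts from $\delta_v$, so $W^a_t=1$ with probability $W^a_{t-}$ and $W^a_t=0$ with probability $1-W^a_{t-}$, which matches the jump dynamics stated in the proposition. The only subtlety, rather than a deep obstruction, is the bookkeeping of the martingale problem to check that the coefficients of the reduced one-dimensional SDE are precisely those of the usual Wright-Fisher diffusion, with effective variance $2b(x)/\widehat{n}_x$ and mutation drift $\bar{r}b(x)$ dictated by the ecological parameters.
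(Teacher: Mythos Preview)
Your proposal is correct and matches the paper's approach: the paper presents this proposition as an immediate corollary of Theorem~\ref{thm:SFVP}, stated without a separate proof (note the $\Box$ in the statement itself), simply by specializing to the two-point kernel \eqref{DK:bernoulli} with $r_K/K\to\bar r$. You have filled in the details---verifying \eqref{hyp-D} via \eqref{A:bernoulli}, testing the martingale problem \eqref{PBM-FV}--\eqref{crochet-PMB-FV} against $\phi=\mathbf{1}_{\{a\}}$, and reading off the jump rule from \eqref{taux}---exactly as intended, and the computations are right.
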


\me
An illustration of this theorem is given in Fig. \ref{simu-WF}.\\

This result can be generalized to discrete marker spaces $\U=\{a_1,\dots a_m\}$, by introducing the transition probabilities $q_{ij}$ to mutate from $a_i$ to $a_j$, $i,j\in \{1,\dots,m\}$. An application is when the marker corresponds to the genetical sequence of $n$ nucleotides ($A$, $T$, $G$ or $C$ for each position). In this case, $m=\card \ \U=4^n$.

\me Traditionally in a population genetics framework, the evolution in finite populations of the diversity at a neutral marker is described as a diffusion process with two fixed parameters: the population size and the mutation ``rate'' (e.g. Crow and Kimura 1970). The population size is related to what is called the ``genetic drift'' and generally refers to the random sampling of gametes performed for reproduction at the beginning of each generation, and the higher the population size, the lower the genetic drift. Under this framework, genetic drift induces stochastic fluctuations in the frequencies of the alleles $A$ and $a$ and can cause the decrease of neutral genetic diversity when an allele is randomly lost. On the other hand, mutation introduces continuously allele $A$ and $a$ in the population and thus allows the restoration and the maintenance of neutral genetic diversity. It is important to note that under the population genetics framework, mutation rates and population size are fixed and do not depend on the ecological processes and their parameters, neither on the trait value when the population is monomorphic for the trait under selection. As a consequence, those parameters do not change as successive selective sweeps occur especially during the adaptation process. Here we can use Equation \eqref{WF} and try to compare the classical population genetics results about the distribution of neutral diversity and the one in our model.

\noindent In an eco-evolutionary framework,  \eqref{WF} first shows that mutation rates and population size, i.e. the genetic drift, are not fixed and depend on the ecological processes and on the trait value $x$. The mutation rates are $\bar{r} \,b(Y_t) q_A $ and $\bar{r}\, b(Y_t) q_a$ in our framework while it is only $q_A$ and $q_a$ under a population genetics framework (e.g. Crow and Kimura 1970). The genetic drift, i.e. the equilibrium population size, is given by $1/\widehat{n}_{Y_t}$ while it is a constant $1/n\,$ in population genetics framework. Second,  \eqref{WF} shows that extra ecological processes affect the distribution of the neutral marker since in the left-hand side there is the term $2 b(Y_t)$. This term can be interpreted as the effect of demographic stochasticity, which is not taken into account in population genetics.

\begin{figure}[!ht]
\begin{center}
\hspace{0cm}\includegraphics[width=6cm,angle=270,trim= 1cm 1cm 1cm 1cm]{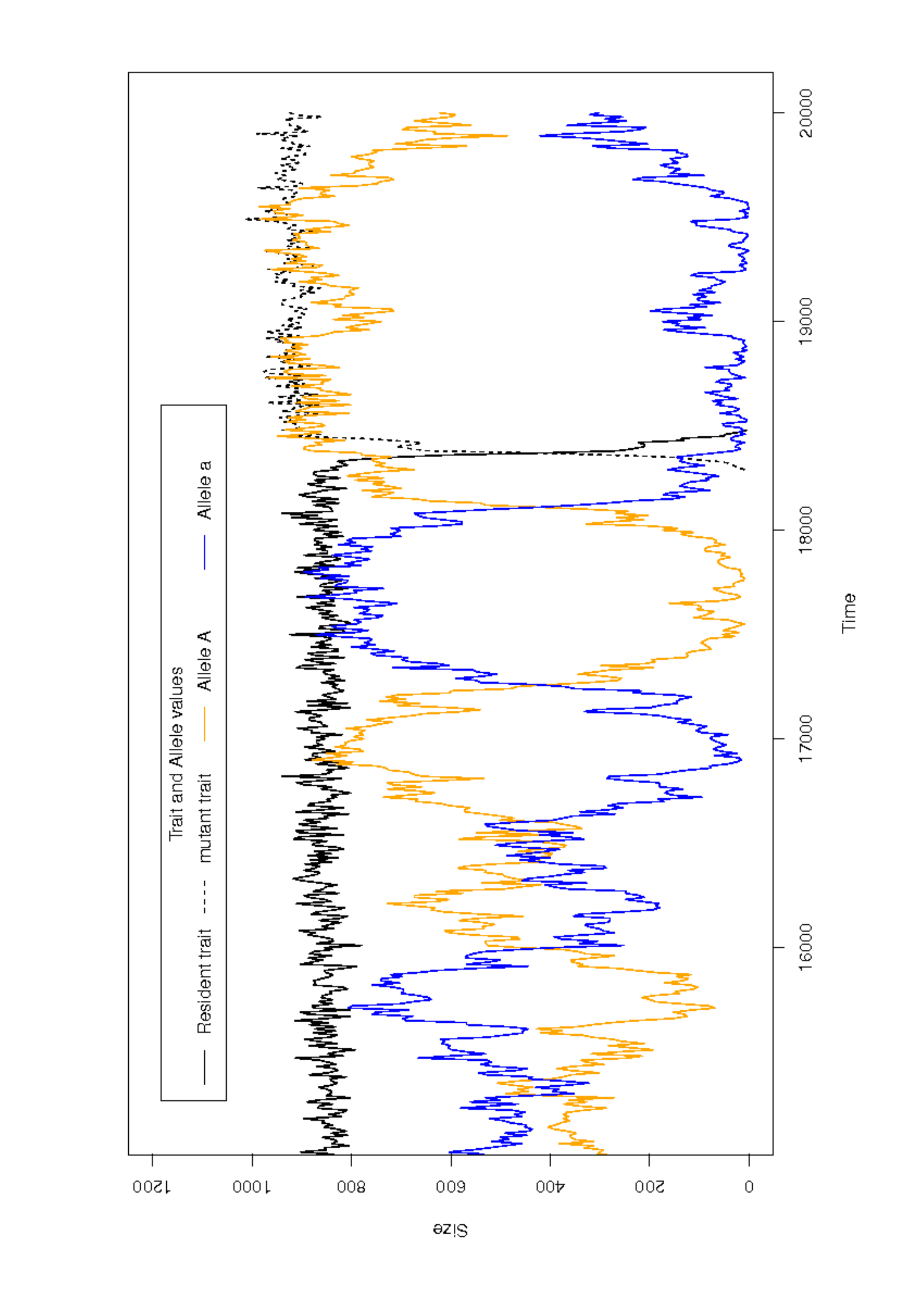}
\caption{\textit{Evolution of sizes of the subpopulations with markers $a$ and $A$. The simulation uses individual-based algorithms. The proportions of marker alleles $a$ and $A$ follow Wright-Fisher diffusions while the size of the population stabilizes around the equilibrium given by the trait value. A trait mutant appears around time 18290, invades and fixes into the population. Before the appearance of this mutant trait, fluctuations in the marker distribution are due to (fast) marker mutation, birth and death stochastic events. At the time when the mutant trait appears, the $A$-allele frequency is 85\%, giving a high probability for an A-allele hitchhike. This is the case in the simulation. After fixation time (around time 18490), the $a$-allele population is extinct. It is regenerated by mutations of the marker but get extinct three times before taking up around time 19600.
}}\label{simu-WF}
\end{center}
\end{figure}

\subsection{Extensions to co-existing traits}\label{section:coexistence}



The work of Champagnat and Méléard \cite{CM2011} generalizes the TSS to the case of coexisting trait values, when Assumption \ref{IIF} is relaxed. They define a polymorphic TSS called polymorphic evolutionary sequence (PES) and denoted by $(X_t)_{t\geq 0}\in \D(\R_+,\mathcal{M}_F(\X))$.
When a mutant trait $y$ appears in a resident population of trait $x_0$ at time $t_1$, either its descendent line is killed with probability $1-[f(y;x_0)/b(y)]_+$, or it survives. In that case, we can have coexistence of $y$ and $x_0$ when there is a positive globally stable non-trivial equilibrium $(n^*_{x_0,y}, n^*_{y,x_0})$ to the Lotka-Volterra system defined in \eqref{largepop-dimorphique}. Therefore the population jumps from $X_{t_1-}=\widehat{n}_{x_0}\delta_{x_0}$ to
$$X_{t_1}=n^*_{x_0,y} \delta_{x_0}(dx)+ n^*_{y,x_0} \delta_{y}(dx). $$

For a probability $\pi$, a trait measure $X$ and $x\in \X$, let us denote by $F_t(\pi,x,X, du)$ the Fleming-Viot process started at $\pi$, evolving in the trait distribution $X$ and parameterized by $x$.

Let $\pi_0$ be the initial marker distribution of the monomorphic population of trait $x_0$. Before the time $t_1$ of appearance of the first mutant, the marker distribution evolves as $(F_t(\pi_0,x_0,\widehat{n}_{x_0}\delta_{x_0},du))_{t\geq 0}$. Let $\pi_{t_1}=F_{t_1}(\pi_0,x_0,\widehat{n}_{x_0}\delta_{x_0},du)$ be the marker distribution at $t_1$ and let $V_1$ be a random variable drawn in the distribution $\pi_{t_1}$. After $t_1$ and before the occurence of the second trait-mutation at $t_2$, the population evolves as
$$n^*_{x_0,y} \delta_{x_0}(dx) F_{t-t_1}(\pi_{t_1},x_0,X_{t_1},du)+n^*_{y,x_0} \delta_y(dx)F_{t-t_1}(\delta_{V_1},y,X_{t_1},du).$$
The processes $F_{t}(\pi_{t_1},x_0,X_{t_1},du)$ and $F_{t}(\delta_{V_1},y,X_{t_1},du)$ are generalizations of the Fleming-Viot process defined in Definition \ref{def:FV}. Indeed their semimartingale decompositions are respectively:

\be \langle F_{t}(\pi_{t_1},x_0,X_{t_1},.),\phi \rangle& = & \langle \pi_{t_1},\phi \rangle +  b(x_{0}) \int_{0}^t \langle  F_{s}(\pi_{t_1},x_0,X_{t_1},.), A \phi \rangle \ ds + M^1_{t}(\phi)\ ; \nonumber \\
\langle F_{t}(\delta_{V_1},y,X_{t_1},.),\phi \rangle& = &  \phi(V_1) +  b(y) \int_{0}^t \langle  F_{s}(\delta_{V_1},y,X_{t_1},.), A \phi \rangle\ ds + M^2_{t}(\phi),
\ee where $M^1(\phi)$ and $M^2(\phi)$ are independent square integrable martingales such that
\begin{align}
\langle M^1(\phi)\rangle_{t}=  &\frac{b(x_{0})+d(x_{0})+\eta(x_{0}) C(0) n^*_{x_0,y}+ C(x_{0}-y)n^*_{y,x_0}} {n^*_{x_0,y}+n^*_{y,x_0}}\nonumber\\
& \hskip 2cm
\int_{0}^t  \left(\langle F_{s}(\pi_{t_1},x_0,X_{t_1},.), \phi^2\rangle - \langle F_{s}(\pi_{t_1},x_0,X_{t_1},.), \phi\rangle^2 \right)ds,\nonumber\\
\langle M^2(\phi)\rangle_{t}=  &\frac{b(y)+d(y)+\eta(y) C(y-x_{0}) n^*_{x_0,y}+ C(0)n^*_{y,x_0}} {n^*_{x_0,y}+n^*_{y,x_0}}\nonumber\\
& \hskip 2cm
\int_{0}^t  \left(\langle F_{s}(\delta_{V_1},y,X_{t_1},.), \phi^2\rangle -  \langle F_{s}(\delta_{V_1},y,X_{t_1},.), \phi\rangle^2 \right)ds.
\end{align}
At time $t_2$, when a third trait appears in the population, the system can evolve to three two or just one coexisting traits, depending on the new trait equilibrium of the Lotka equations that is reached. For each of the traits, the marker distribution evolves as a generalization of the Fleming-Viot processes above.

\bi
\begin{rem} The above equations show that, when there is coexistence of two traits in the population, the markers in the  subpopulations defined by the two traits  evolve independently but with parameters depending on the two co-existing traits.  Thus, when there is a diversification event in the population, the distribution of the neutral diversity in one of the two subpopulations does not evolve  as  completely forgetting  the other one, as it is usually assumed. The parameters of the underlying Fleming-Viot process depend on the complete trait distribution.
\end{rem}

\bi We present in Figure  \ref{simu2}  simulations in the case of coexistence, with the same model and parameters as in Section \ref{section:DD}, except $\sigma_C = 0.7$ and the initial condition: $\ x_{0}= -0.1.$ The simulations (see Fig. \ref{simu2}) show the appearance of a new mutant trait (yellow) in a population of two coexisting traits (black and blue).


\begin{figure}[!ht]
\begin{center}
\begin{tabular}{cc}
  (a) & (b) \\
& \\
\hspace{0cm}\includegraphics[width=4cm,angle=0,trim= 6cm 6cm 6cm 6cm]{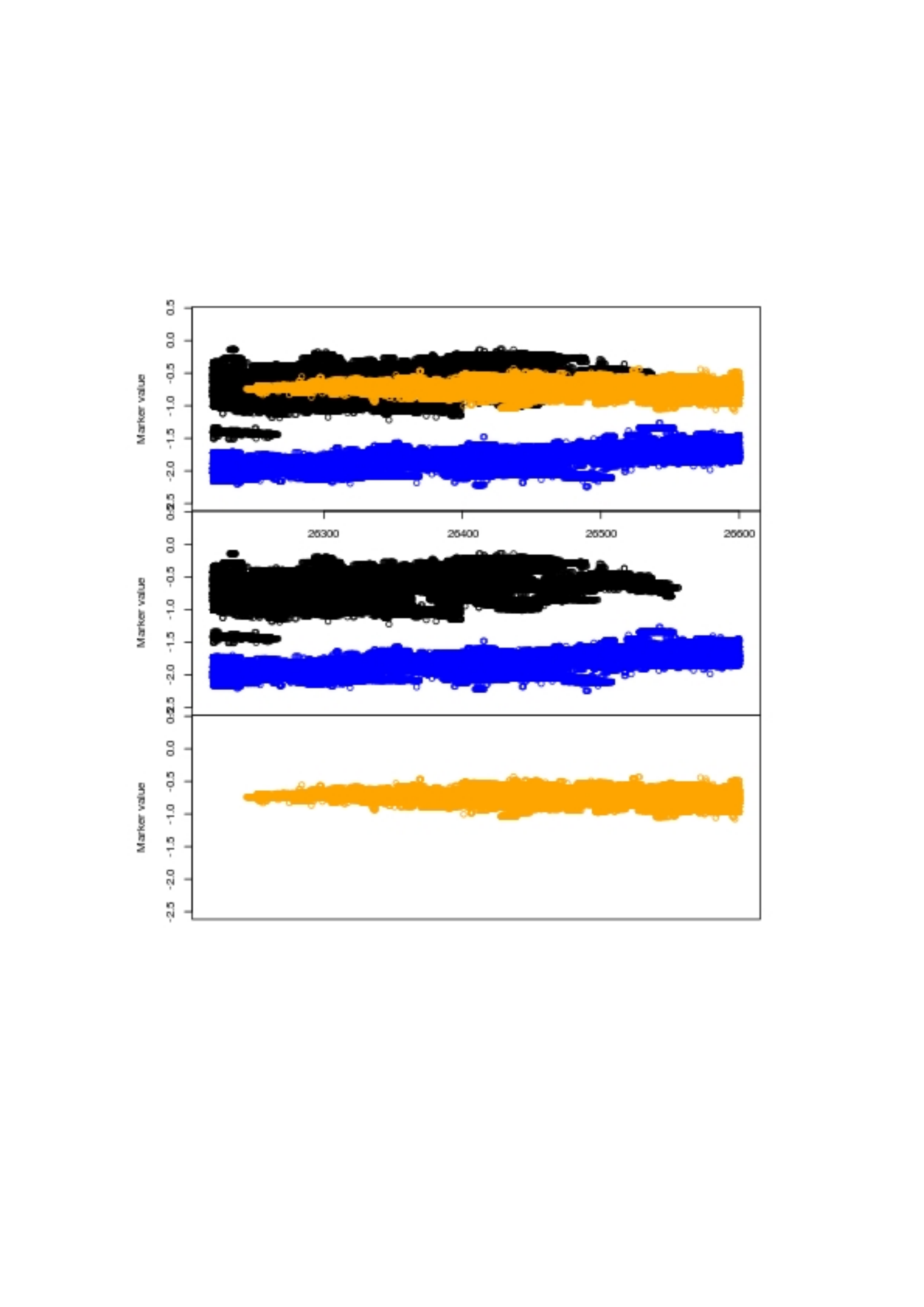}
&
\hspace{2cm} \includegraphics[width=4cm,angle=270,trim= 24.5cm 6cm -16.5cm 4cm]{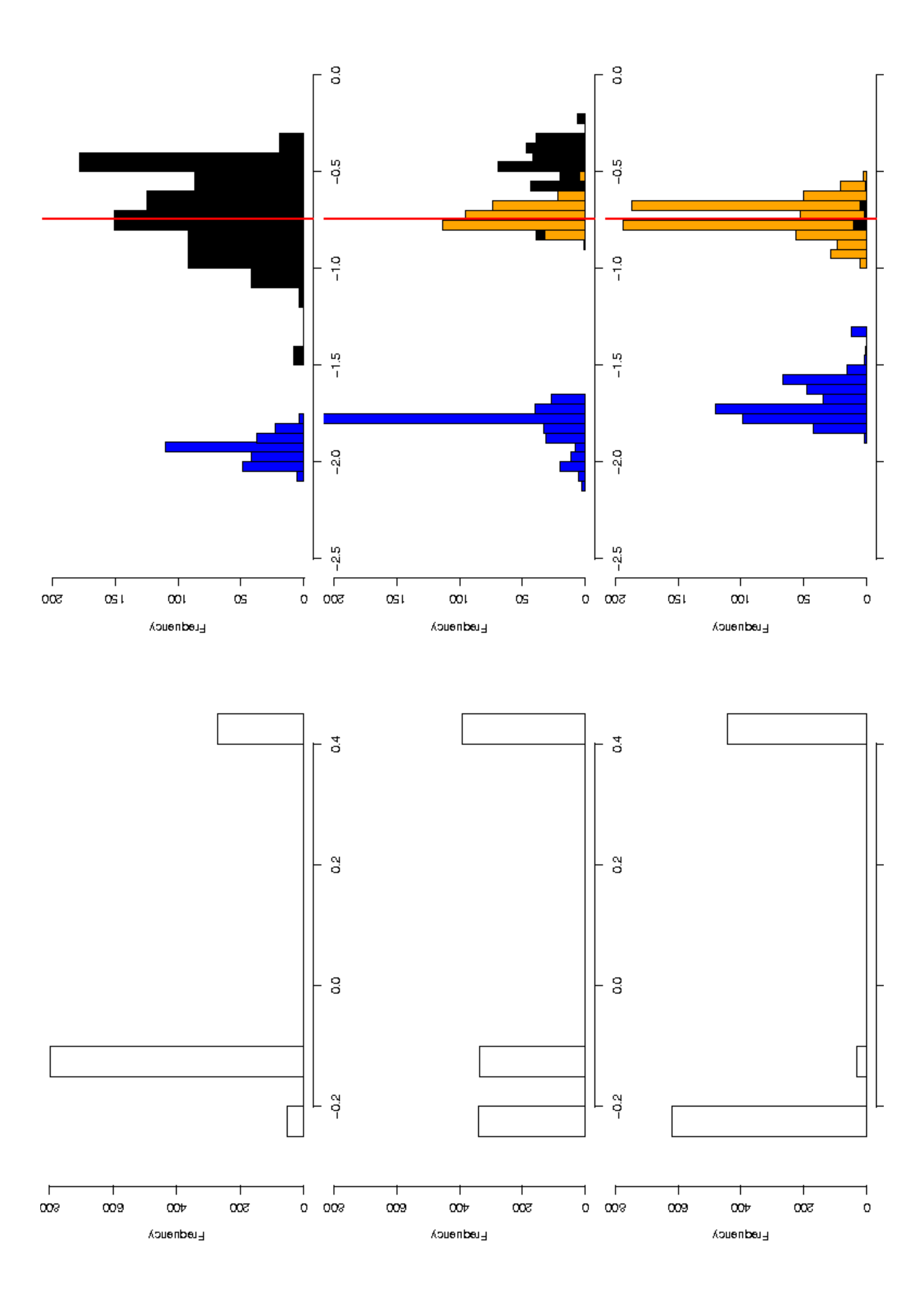}
\end{tabular}
\vspace{-5cm}
\caption{\textit{Neutral marker dynamics in a trait-dimorphic population. Evolution of markers with respect to time. (a) The mutant population (yellow) and resident populations (black and blue) are shown separately and together. (b) Distributions of the traits and markers in the population at three times during the invasion. The marker value of the initial mutant is indicated by the red line. }}\label{simu2}
\end{center}
\end{figure}


\section{Proof of Theorem \ref{thm:SFVP}}\label{section:proof}



Let us sketch the proof. In this section, we will suppose that Assumptions \ref{hypo}, \ref{IIF} are satisfied and the initial conditions are $\nu^K_{0}(dy,dv)= n^K_{0}\,\delta_{(x_{0},u_{0})}(dy,dv)$ with
$\ \lim_{K\to \infty} n^K_{0} = \widehat n_{x_{0}}$ and $\sup_{K\in \N^*} \E((n^K_0)^3)<+\infty$.

\me First, we recall results due to Champagnat et al. \cite{champagnatferrieremeleard2} that provide the finite marginal convergence of the trait process $({X}^K_{Kt} ; t\geq 0)$. We extend these results to obtain the weak convergence of the measures $(X^K_{Kt}(dx)dt ; K\geq 0)$ in $\mathcal{M}_F(\X\times[0,T])$ embedded with the weak convergence topology. This corresponds to the convergence of $({X}^K_{Kt} ; t\geq 0)$ in the sense of occupation measures, as developed by Kurtz \cite{kurtzaveraging}. Secondly, we include the fast component (the marker) and prove the tightness of the sequence $(\nu^K_{Kt}(dx,du)dt ; K\geq 0)$ in $\mathcal{M}_F(\X\times \U\times [0,T])$. We then consider a subsequence, again denoted by $(\nu^K_{Kt}(dx,du)dt, K\geq 0)$ with an abuse of notation, that converges to a limit $\Gamma(dt,dx,du)\in \mathcal{M}_F([0,T]\times \X\times \U)$ that we have to identify. This derivation is done in several steps. When a successful mutant appears in the monomorphic population with trait $x$, the transition period to fixation is to be considered carefully. It has been proved in \cite{champagnat06} that these transitions are of order $\log(K)$. We prove that during this time interval, the marker distribution in the mutant subpopulation remains a Dirac mass at the value of the initial mutant. This results from the combined effects of small or rare marker mutations, large population and slow take-off of the new mutant population. Then, we show that in a trait monomorphic population with value $x$, the marker distribution converges to a Fleming-Viot superprocess parameterized by $x$.

\subsection{Semimartingale decomposition of $\nu^K$}

Let us introduce some notation to keep forthcoming formula simple. For $\nu\in \mathcal{M}_F(\X\times \mathcal{U})$ and $\phi(x,u)\in \Co(\mathcal{X}\times \mathcal{U},\R)$, we define the (nonlinear) generators $B^K$ and $D^K(\nu)$ such that
\begin{align}
B^K\phi(x,u)= & (1-p_K)(1-q_K) b(x)\phi(x,u)\nonumber\\
 +&   p_K(1-q_K)b(x) \int_{\mathcal{X}} \phi(x+k,u) m(x,k)dk \nonumber\\
+ &  q_K(1-p_K)b(x) \int_{\mathcal{U}} \phi(x,u+h) G_K(u,dh) \nonumber\\
+ &  p_K\ q_K \ b(x) \int_{\mathcal{X}\times \mathcal{U}} \phi(x+k,u+h) m(x,k) dk\ G_K(u,dh) \label{def:B(nu)}\\
D^K(\nu)\phi(x,u)= & \big(d(x)+\eta (x) C* \nu(x)\big)\phi(x,u).\label{def:D(nu)}
\end{align}

\bi
The process $\langle \nu^K_.,\phi\rangle$ is a  square integrable semi-martingale and we give its characteristics.


\begin{prop}\label{prop:nuK}
For a continuous bounded function $\phi(x,u)$ on $\mathcal{X}\times \mathcal{U}$, the process
\begin{align}
M^{K,\phi}_t 
= & \langle \nu^K_t,\phi\rangle - \langle \nu^K_0,\phi\rangle - \int_0^t ds  \int_{\mathcal{X}\times \mathcal{U}} \nu^K_s(dx,du) \big(B^K-D^K(X^K_s)\big)\phi(x,u)\label{MKf}
\end{align}is a square integrable martingale with previsible quadratic variation
\begin{align}
\langle M^{K,\phi}\rangle_t
= & \frac{1}{K}\int_0^t ds  \int_{\mathcal{X}\times \mathcal{U}} \nu^K_s(dx,du) \big(B^K+D^K(X^K_s)\big)\phi^2(x,u).\label{crochetMKf}
\end{align}
\end{prop}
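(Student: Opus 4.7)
The plan is to recognize $\nu^K$ as a pure-jump Markov process on $\mathcal{M}_F(\mathcal{X}\times\mathcal{U})$, write out its infinitesimal generator explicitly on test functionals $F_\phi(\nu)=\langle\nu,\phi\rangle$, and then apply Dynkin's formula together with the standard carré-du-champ identity $\Gamma^K F = L^K F^2 - 2 F\,L^K F$ to read off the predictable bracket. This is the by now standard framework developed in Fournier--Méléard and in Champagnat--Ferrière--Méléard for measure-valued birth and death processes built from Poisson point measures, so no new idea is needed; the work is pure bookkeeping.

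First, I enumerate the jumps of $\nu^K$. From each individual of type $(x,u)$, five events may occur: a reproduction with no mutation, adding $\delta_{(x,u)}/K$ to $\nu^K$ at rate $(1-p_K)(1-q_K)b(x)$; a reproduction with trait mutation only, adding $\delta_{(x+k,u)}/K$ at rate $p_K(1-q_K)b(x)\,m(x,k)dk$; a reproduction with marker mutation only, adding $\delta_{(x,u+h)}/K$ at rate $q_K(1-p_K)b(x)\,G_K(u,dh)$; a double mutation reproduction, adding $\delta_{(x+k,u+h)}/K$ at rate $p_K q_K b(x)\,m(x,k)dk\,G_K(u,dh)$; and a death, removing $\delta_{(x,u)}/K$ at rate $d(x)+\eta(x)\,C*\nu^K(x)$. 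Each birth modifies $F_\phi$ by $\phi(\text{offspring})/K$ and each death by $-\phi(x,u)/K$. Summing the birth contributions over all individuals, the $1/K$ Dirac weight cancels with the counting factor against $\nu^K$, and the four birth terms combine into exactly the four summands of $B^K\phi$ in (2.9), while the death contributions give $-D^K(X^K_s)\phi$ as in (2.10). Hence
\begin{equation*}
L^K F_\phi(\nu^K_s) \;=\; \int_{\mathcal{X}\times\mathcal{U}} \nu^K_s(dx,du)\,\bigl(B^K\phi(x,u)-D^K(X^K_s)\phi(x,u)\bigr),
\end{equation*}
and Dynkin's formula identifies $M^{K,\phi}_t$ defined by (3.7) as a local martingale.

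Next I compute the quadratic variation. For a pure-jump process, the carré du champ $\Gamma^K F_\phi(\nu)$ reduces to the integral of the squared jumps against the jump intensity. Since each jump shifts $\langle\nu^K,\phi\rangle$ by $\pm\phi(\text{type})/K$, summing the squared increments weighted by birth and death rates gives
\begin{equation*}
\Gamma^K F_\phi(\nu^K_s) \;=\; \frac{1}{K}\int_{\mathcal{X}\times\mathcal{U}} \nu^K_s(dx,du)\,\bigl(B^K\phi^2(x,u)+D^K(X^K_s)\phi^2(x,u)\bigr),
\end{equation*}
which is precisely (3.8). Finally, to upgrade $M^{K,\phi}$ from local to true square-integrable martingale I invoke the moment bound (2.8): the boundedness of $\phi$, $b$, $d$, $\eta$ and $C$ combined with $\sup_{K,t\leq T}\mathbb{E}[\langle\nu^K_t,\mathbf{1}\rangle^2]<\infty$ (using $p=2$) ensures that both $\mathbb{E}[\langle\nu^K_t,\phi\rangle^2]$ and $\mathbb{E}[\langle M^{K,\phi}\rangle_t]$ are finite on $[0,T]$, via a standard localization/Fatou argument on a sequence of stopping times $T_n=\inf\{t:\langle\nu^K_t,\mathbf{1}\rangle\geq n\}$.

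There is no real obstacle here; the only subtle point is making sure the five jump types are accounted for exactly once in $B^K\phi$ and that the death intensity $\eta(x)C*\nu^K(x)$, which itself depends on the current state, is correctly folded into $D^K(X^K_s)$ — this is automatic once one notes that the competition interaction depends only on the trait marginal $X^K_s$.
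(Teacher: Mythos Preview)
Your proposal is correct and follows essentially the same route the paper points to: the paper's own proof is a one-line reference to Fournier--M\'el\'eard \cite{fourniermeleard}, noting that the key step is the Poisson point measure construction of $\nu^K$, while you spell out the generator and carr\'e-du-champ computation explicitly for linear functionals $F_\phi$. The bookkeeping on the five jump types and the use of the moment bound \eqref{estimeemoment} with $p=2$ to upgrade to a true square-integrable martingale are exactly what that adaptation amounts to.
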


\begin{proof}The dynamics being given in Section \ref{sec:IBM}, the proof can be adapted from  Fournier and Méléard \cite[Lemma 5.2]{fourniermeleard}. One main step consists in showing that there exists a Poisson point measure driving the measure-valued processes $\nu^K$ for all $K\in \N^*$.
\end{proof}

\subsection{Convergence of the trait-marginal in the trait mutation time scale}

\bi\noindent As previously emphasized, the trait dynamics is described by the measure-valued process $ X^K$ which does not depend on the markers. This process  has been fully studied in \cite{champagnat06,champagnatferrieremeleard2}.
In this section, we recall the finite marginal convergence result obtained in these papers. We  give some additional properties concerning the topology involved. This result shows a time scale separation with successive fixations of successful mutants, under Assumptions \ref{hypo} and \ref{IIF}. Notice that the time scale assumption is
  \begin{equation}
    \label{eq:u_K-K}
    \forall V>0,\quad \log K\ll \frac{1}{K p_K}\ll \exp(VK), \quad \hbox{ as } K\to \infty,
  \end{equation}which is realized in our case for $p_K= 1/K^2$.

   \me
\begin{thm}  \label{thm:TSS}
  Under Assumptions \ref{hypo} and \ref{IIF}, let us also assume that the initial population is trait-monomorphic: $ X^K_0=n^K_0\delta_{x}$ for $x\in {\cal X}$ and   $n^K_0\rightarrow \widehat{n}_{x}$ in probability  and $\sup_{K\in \N^*} \E((n^K_0)^3)<+\infty$.

 \me
  Then, the sequence $( X^K_{Kt}; t\geq 0)$ converges to the pure jumps singleton measure-valued Markov process
  $(\widehat{n}_{Y_{t}}\,\delta_{Y_{t}}; t\geq 0)$ defined as follows: $Y_0=x$,
  and the process $Y$ jumps from
  $\,{x}$ to $\ {x+k}$
  with jump measure
  $  \  b(x)\, \widehat n_{x}\frac{[f(x+k;x)]_+}{b(x+k)}\,m(x,k)dk.
 $

 \me
  The convergence holds in the sense of finite dimensional distributions on ${\cal M}_F(\X)$ equipped with the topology of total variation. \end{thm}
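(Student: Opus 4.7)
The strategy, following the three-phase analysis of Champagnat \cite{champagnat06}, is to decompose the time interval $[0, Kt]$ into alternating phases that capture, respectively, the deterministic ecological equilibration, the arrival and fate of individual mutants, and the invasion--fixation dynamics. Between mutations the population is trait-monomorphic at some value $x$, and by the classical large-population law in the appendix (see \eqref{eq1:logistique}--\eqref{nchap}) the rescaled mass $\langle X^K_t,\mathbf{1}\rangle$ is, with probability tending to one, confined in a neighborhood of $\widehat n_x$ during time windows of order $\exp(VK)$ for some $V>0$. Because the per-individual trait-mutation rate is $p_K b(x)$ and the equilibrium mass is $\widehat n_x K$, the first trait mutation occurs at rate $\approx K p_K b(x)\widehat n_x$; multiplying by the acceleration factor $K$ used in $X^K_{Kt}$, this matches the announced jump rate of $Y$ in the limit $K\to\infty$, provided the scaling \eqref{eq:u_K-K} holds (which, for $p_K=1/K^2$, it does).

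The second phase analyses a single mutant $x+k$ appearing in a resident population trapped near $\widehat n_x$. Following Champagnat et al. \cite{champagnatferrieremeleard2}, one couples the mutant subpopulation with two linear birth-and-death branching processes whose birth rates are $b(x+k)$ and whose death rates differ from $d(x+k)+\eta(x+k)C(k)\widehat n_x$ by $o(1)$ perturbations controlled by the fluctuations of the resident. Classical branching-process theory then yields survival probability $[f(x+k;x)]_+/b(x+k)$ up to vanishing error. On the event of survival the mutant mass reaches a fixed threshold $\varepsilon>0$ in time $O(\log K)$, and the invasion-implies-fixation assumption \ref{IIF} combined with a comparison with the two-dimensional Lotka--Volterra flow shows that the resident deterministically decays to a vicinity of $0$ in further time $O(1)$, after which a symmetric branching-process argument drives the resident population to extinction in another $O(\log K)$ window. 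Since $\log K\ll 1/(Kp_K)=K$, all these transitions are instantaneous on the mutation time scale, so the process $(X^K_{Kt})$ is close to the pure-jump trajectory $\widehat n_{Y_t}\delta_{Y_t}$ between successive successful mutants.

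Iterating this decomposition over the first $n$ successful-mutation attempts, and using the strong Markov property at the fixation times together with the fact that failed mutants disappear on a time scale $O(\log K)$ and contribute nothing to the trait marginal limit, one obtains the convergence of finite-dimensional distributions of $(X^K_{Kt_1},\dots,X^K_{Kt_n})$ to those of $(\widehat n_{Y_{t_1}}\delta_{Y_{t_1}},\dots,\widehat n_{Y_{t_n}}\delta_{Y_{t_n}})$ via a Gronwall-type comparison with the idealised jump chain. The refinement to the topology of total variation on $\mathcal M_F(\X)$ (rather than weak convergence) follows from the fact that the limit charges only countably many points: at each deterministic evaluation time $t_i$, the limit is a single Dirac mass $\widehat n_{Y_{t_i}}\delta_{Y_{t_i}}$, and with high probability $X^K_{Kt_i}$ is supported on a single point (the current resident trait) whose mass concentrates at $\widehat n_{Y_{t_i}}$, so convergence in weak topology automatically upgrades to total variation.

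The main obstacle is the third phase: controlling simultaneously the invasion of the successful mutant and the extinction of the resident during the $O(\log K)$ window. One must rule out the event that a second trait mutation occurs during this window (which has probability $O((\log K) Kp_K)=O(\log K/K)\to 0$ uniformly over residents), and one must propagate the $L^p$ moment bound \eqref{estimeemoment} through the comparison arguments so that the exceptional events of branching-process deviation are truly negligible after acceleration by $K$. All of this is carried out in detail in \cite{champagnat06}; the present statement essentially repackages that analysis, with the added observation on the topology that becomes essential when the marker is reintroduced in the next subsection.
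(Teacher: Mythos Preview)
Your proposal is correct and aligns with the paper's treatment: the paper does not re-prove this result but simply cites Champagnat \cite{champagnat06} (logistic case) and \cite{champagnatferrieremeleard2} (general case), and your sketch is an accurate summary of the three-phase argument carried out in those references. Your closing remark that the statement ``essentially repackages that analysis'' is exactly the paper's position.
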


\me This theorem has been proved in Champagnat \cite{champagnat06}
 for the logistic case and generalized in \cite{champagnatferrieremeleard2}.

 \me The trait-marginal process $(X^K_{Kt} ; t\in [0,T])$ does not converge in $\D([0,T],\mathcal{M}_F(\X))$ embedded with the Skorokhod topology. Indeed, the size of jumps is upperbounded by ${1\over K}$ and nevertheless the limiting  total mass process has  jumps, preventing trajectorial tightness (at least in the $J1$-topology).  Following the idea of Kurtz \cite{kurtzaveraging} and as developed in Méléard and Tran \cite{meleardtransuperage} and Gupta et al. \cite{guptametztran},  a  weaker topology consists  in forgetting the process point of view and considering the measure $X^K_{Kt}(dx)dt$ in $\mathcal{M}_F([0,T]\times \X)$ embedded with the topology of weak convergence. This convergence in the sense of occupation measures strengthen the result of Theorem \ref{thm:TSS} but in a topology weaker than the Skorohod topology.

\noindent
To achieve this, as in Collet et al. \cite{CMM}, we first introduce the $M_1$-topology on $\D([0,T],\R_+)$. It is weaker than the usual $J_{1}$-topology and allows monotonous processes with jumps tending to $0$ to converge to processes with jumps (see Skorokhod \cite{skorohod}). For a c\`adl\`ag function $h$ on $[0,T]$, the continuity modulus for the $M_{1}$-topology  is given by
\be
\label{modulus}
w_{\delta}(h) = \sup_{\stackrel{\scriptstyle 0\leq t_{1}\leq t\leq t_{2}\leq T;} {0\leq t_{2}-t_{1}\leq \delta}} d(h(t), [h(t_{1}), h(t_{2})]).
\ee
Note that if the function $h$ is monotone, then $w_{\delta}(h) = 0$.

\begin{prop} Let us consider a continuous and monotonous function $g$.
Then, under Assumptions \ref{hypo} and \ref{IIF}, the process $(R^K_{t}, t\in [0,T])$ defined by
$$R^K_{t} = \int g(x)  X^K_{Kt}(dx)$$
converges in law in the sense of the Skorohod  $M_{1}$-topology to the process $(R_{t},  t \in [0,T])$ where $R_{t} =  \widehat{n}_{Y_{t}}\,g(Y_{t})$.
\end{prop}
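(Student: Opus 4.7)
The plan is to combine the finite-dimensional convergence from Theorem~\ref{thm:TSS} with tightness in the Skorokhod $M_1$-topology on $\D([0,T],\R)$. Since $|R^K_t| \leq \|g\|_\infty \, N^K_{Kt}/K$ and the moment bound \eqref{estimeemoment} yields $\sup_K \E[\sup_{t \leq T}(N^K_{Kt}/K)^3] < \infty$, the sup-norm of $R^K$ is uniformly controlled in probability; the key tightness condition reduces to showing that $\lim_{\delta \to 0} \limsup_K \P(w_\delta(R^K) > \eta) = 0$ for every $\eta > 0$. The crucial point is that $w_\delta$ vanishes on monotone functions, and more generally gets no contribution from a window $[t_1,t_2]$ on which the trajectory stays in the closed interval between $R^K(t_1)$ and $R^K(t_2)$.

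I would decompose $[0,T]$ using the (random) jump times $\tau_1^K < \tau_2^K < \cdots$ of the TSS, which are finitely many on $[0,T]$ with high probability by Theorem~\ref{thm:TSS}. On inter-jump intervals, the analysis of Champagnat~\cite{champagnat06} implies that $X^K_{Kt}$ remains trait-monomorphic at the current TSS state $Y_t$, with rescaled total mass concentrated around $\widehat{n}_{Y_t}$ up to fluctuations of order $1/\sqrt{K}$. Hence on subintervals of length $\leq\delta$ disjoint from the transition windows, $R^K_t = g(Y_t)\widehat{n}_{Y_t} + o_{\P}(1)$ and the contribution to $w_\delta(R^K)$ is $o_{\P}(1)$.

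The main obstacle is the behaviour of $R^K$ in a transition window $[\tau_i^K - \gamma_K, \tau_i^K + \gamma_K]$ of width $\gamma_K = O(\log K / K)$ around each TSS jump, as identified in the three-phase invasion analysis of \cite{champagnatferrieremeleard2}. Inside this window, with high probability, the population is bi-trait-morphic at traits $x := Y_{\tau_i^K-}$ and $x+k := Y_{\tau_i^K}$, so that $R^K_t \simeq g(x) n_1^K(t)/K + g(x+k) n_2^K(t)/K$. In the stochastic phases (a) (take-off of the mutant from $1$ to $\varepsilon K$) and (c) (extinction of the resident), the non-dominant subpopulation remains below $\varepsilon K$, so $R^K_t$ is $\|g\|_\infty \varepsilon$-close to the corresponding endpoint. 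In the intermediate deterministic Lotka--Volterra phase (b), $(n_1^K, n_2^K)/K$ is close to the heteroclinic orbit from $(\widehat{n}_x,0)$ to $(0,\widehat{n}_{x+k})$ ensured by Assumption~\ref{IIF}; in dimension two this competitive system admits no periodic orbit, and Assumption~\ref{IIF} rules out an interior equilibrium, so the orbit is the graph of a continuous decreasing function $n_1 = \phi(n_2)$. Since $g$ is monotonous on $\X$, the function $R(n_2) := g(x)\phi(n_2) + g(x+k) n_2$ along the orbit can be analysed via the sign structure of $\phi'$ coming from the Lotka--Volterra vector field; combined with the monotonicity of $g$, one checks that the extreme values of $R$ on the orbit are attained at the endpoints $R(0) = g(x)\widehat{n}_x$ and $R(\widehat{n}_{x+k}) = g(x+k)\widehat{n}_{x+k}$ up to an $o(1)$ error, so the contribution of the transition window to $w_\delta(R^K)$ is $o(1)$ as well.

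Putting these two estimates together gives $\lim_{\delta \to 0} \limsup_K \P(w_\delta(R^K) > \eta) = 0$ and hence $M_1$-tightness of $(R^K)$. Combined with finite-dimensional convergence at the continuity points of $R$ coming from Theorem~\ref{thm:TSS}, this identifies the limit as $R_t = \widehat{n}_{Y_t} g(Y_t)$ and concludes the argument. The hardest technical step is the uniform $M_1$-control across the random transition windows, where the stochastic invasion and extinction phases must be coupled to the deterministic Lotka--Volterra orbit and the monotonicity of $g$ must be used crucially to rule out overshoots of $R$ outside the segment joining $g(x)\widehat{n}_x$ to $g(x+k)\widehat{n}_{x+k}$.
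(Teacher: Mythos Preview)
Your approach is essentially the same as the paper's: finite-dimensional convergence from Theorem~\ref{thm:TSS} plus $M_1$-tightness via control of the modulus $w_\delta$, with the key step being that during an invasion the process is well approximated by the deterministic Lotka--Volterra trajectory, along which $t\mapsto g(x)n_t(x)+g(y)n_t(y)$ is monotone so that $w_\delta$ vanishes.

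Two differences of emphasis are worth flagging. First, the paper's reduction is slightly cleaner: since trait mutations arrive at bounded rate on the $Kt$-scale, the probability of two mutations within a $\delta$-window is $o(\delta)$, so it suffices to analyse a single transition; this avoids having to sum over all $\tau_i^K$. Second, and more substantively, your claim that ``the extreme values of $R$ on the orbit are attained at the endpoints'' is asserted but not established, and is in fact exactly the point where the monotonicity of $g$ is needed. The paper makes this step explicit: from the phase portrait of the competitive Lotka--Volterra system under Assumption~\ref{IIF} one has either both $n_t(x),n_t(y)$ increasing, or $\dot n_t(x)<0$ and $\dot n_t(y)>0$; in the latter case one writes $\frac{d}{dt}\big(g(x)n_t(x)+g(y)n_t(y)\big)=(g(y)-g(x))\dot n_t(y)+g(x)(\dot n_t(x)+\dot n_t(y))$ and checks the sign using the monotonicity of $g$ and the structure of the vector field (this is where the reference to the phase-portrait appendix of \cite{champagnatthesis} enters). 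Your formulation via the graph $n_1=\phi(n_2)$ would need the same sign analysis to conclude. Once you fill in this step, your argument and the paper's coincide.
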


\bi
\begin{proof} Assume that $g$ is non-decreasing.
From Theorem \ref{thm:TSS},
finite dimensional distributions of $(R^{K}_{t}, t\in [0,T])$ converge to those of $(\widehat{n}_{Y_{t}}\,g(Y_{t}),  t \in [0,T])$. By  \cite{skorohod} Theorem 3.2.1, it remains to prove that for all $\eta>0$,
$$\lim_{\delta\to 0} \limsup_{K\to \infty} \mathbb{P}(w_{\delta}(R^{K}_{.}) >\eta) = 0,
$$
where $w_{\delta}$ has been defined in \eqref{modulus}.

\noindent The mutation rate in $(R^{K}_{t}, t\in [0,T])$ being bounded, the probability that two mutations occur within a time less that $\delta$ is $o(\delta)$. It is therefore enough to study the case where there is  at most one mutation in the time interval $[0,\delta]$. Following Champagnat \cite{champagnat06}, the path of $R^K$ can be decomposed into several subpaths, each of them being closed to a large population deterministic measure-valued function $\xi$ (See Proposition \ref{prop:gdepop} in the appendix) with a probability tending to 1.
Away from invading mutations and for a trait-monomorphic population with trait $x$, $\langle \xi_{Kt},g\rangle = g(x) n_{Kt}(x)$ where $n_{.}(x)$ is the solution of the logistic equation \eqref{eq1:logistique}. We can easily check that $t\to n_{t}(x)$ converges monotonously to its stable equilibrium $\widehat n_{x}$ and then $\langle \xi_{Kt},g\rangle$ is monotonous and  the modulus of continuity tends to $0$. Around an invading mutant $y$ , $\langle \xi_{Kt},g\rangle$ is close to $n_{Kt}(x) g(x) + n_{Kt}(y) g(y)$ where $(n_{t}(x), n_{t}(y)) $ is solution of the Lotka-Volterra system \eqref{largepop-dimorphique} with an initial condition close to $(\widehat n_{x},0)$. The mutant $y$ invades if the fitness function $f(y;x)$ is positive (and $f(x;y)$ is negative). From Assumption \ref{IIF}, an easy study of the Lotka-Volterra system (see for example the appendix in Champagnat \cite{champagnatthesis}, Figure  (b) p.187), shows that either $n_{t}(x)$ and $n_{t}(y)$ are increasing or $\dot n_{t}(x) <0 ;\ \dot n_{t}(y) >0$. In that case we can write $${d\over dt}(n_{t}(x) g(x) + n_{t}(y) g(y)) = (g(y)-g(x))\dot n_{t}(y) +g(x) (\dot n_{t}(y)-\dot n_{t}(x))\geq 0$$ since $g$ is monotonous. Therefore  the function $\langle \xi_{Kt},g\rangle$ is increasing for $K$ large enough and the same conclusion holds.
\end{proof}

\begin{cor}\label{corollary:occup-X}
The sequence of random measures
$  X^K_{Kt} (dx) dt$ converges in law to the random measure $  \widehat{n}_{Y_{t}}\, \delta_{Y_{t}}(dx) dt$ in $\mathcal{M}_F([0,T]\times \X)$ embedded with the weak convergence topology.
\end{cor}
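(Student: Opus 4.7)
The plan is to combine a tightness argument in $\mathcal{M}_F([0,T]\times\X)$ with an identification of subsequential limits through a separating family of test functions built from the preceding proposition. Write $\mu^K(dt,dx) := X^K_{Kt}(dx)\,dt$. Since $[0,T]\times\X$ is compact, tightness of $(\mu^K)_K$ for the weak topology reduces to a uniform control of the total mass: by \eqref{estimeemoment},
\[
\E\bigl[\mu^K([0,T]\times\X)\bigr] = \int_0^T \E\langle X^K_{Kt},\mathbf{1}\rangle\,dt \leq CT
\]
uniformly in $K$, and Markov's inequality yields tightness.

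For any $\varphi \in \Co([0,T],\R)$ and any continuous monotonous $g$ on $\X$, Fubini gives
\[
\langle\mu^K,\varphi\otimes g\rangle = \int_0^T \varphi(t)\, R^K_t\,dt, \qquad R^K_t := \langle X^K_{Kt},g\rangle.
\]
The preceding proposition asserts $R^K \to R$ in law in $D([0,T],\R_+)$ for the $M_1$-topology, with $R_t = \widehat{n}_{Y_t}\, g(Y_t)$. Because $Y$ is a pure-jump process with locally bounded rate, $R$ has almost surely only finitely many jumps on $[0,T]$; $M_1$-convergence at continuity points combined with a Skorokhod coupling gives $R^K_t \to R_t$ for Lebesgue-almost every $t$, and \eqref{estimeemoment} provides a uniform $L^1$ bound, so dominated convergence yields
\[
\int_0^T \varphi(t)\, R^K_t\,dt \;\longrightarrow\; \int_0^T \varphi(t)\,\widehat{n}_{Y_t}\, g(Y_t)\,dt \quad \text{in law}.
\]
Hence any subsequential limit $\mu$ of $(\mu^K)$ satisfies $\langle\mu,\varphi\otimes g\rangle = \int_0^T \varphi(t)\,\widehat{n}_{Y_t}\, g(Y_t)\,dt$ in law for every such pair $(\varphi,g)$.

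To extend to arbitrary $g \in \Co(\X,\R)$ and thereby identify $\mu$, I would approximate $g$ uniformly by polynomials on the compact interval $\X\subset\R$ (Stone-Weierstrass), writing each polynomial as the difference of two non-decreasing $C^1$ functions obtained by splitting its derivative into positive and negative parts. The uniform mass bound transports the equality above from monotonous $g$ to all $g \in \Co(\X,\R)$. A second application of Stone-Weierstrass on $[0,T]\times\X$ shows that the linear span of tensor products $\{\varphi\otimes g\}$ is dense in $\Co([0,T]\times\X,\R)$; such a dense subalgebra is separating for $\mathcal{M}_F([0,T]\times\X)$ with the weak topology, identifying $\mu$ with $\widehat{n}_{Y_t}\delta_{Y_t}(dx)\,dt$ in law.

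The main technical point is to upgrade marginal convergence of $\langle\mu^K,F\rangle$ to joint convergence of $(\langle\mu^K,F_1\rangle,\ldots,\langle\mu^K,F_n\rangle)$ for finite collections, which is what combines with tightness to deliver convergence in law of $\mu^K$ itself. I would address this by running the argument of the preceding proposition on the vector $(R^{K,g_1},\ldots,R^{K,g_n})$ for monotonous $g_i$: the approximating deterministic sub-paths of the Lotka-Volterra flow remain coordinatewise monotonous away from invasion events, so componentwise $M_1$-tightness persists, while Theorem \ref{thm:TSS} supplies joint finite-dimensional convergence because all limit components are deterministic functions of the same TSS process $(Y_t)$. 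A Cram\'er-Wold argument then promotes this to convergence in law of the vector of time integrals, closing the proof.
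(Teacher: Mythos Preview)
Your proof is correct and follows the same route as the paper: tightness from the uniform mass bound on the compact space $[0,T]\times\X$, then identification via the $M_1$-convergence of the preceding proposition, the fact (from Skorokhod) that $M_1$-limits agree pointwise off a countable set, and dominated convergence for the time integrals using \eqref{estimeemoment}. The paper shortcuts your Stone--Weierstrass/monotone-decomposition step by working directly with the countable monotone separating family $\{x\mapsto e^{-qx}:q\in\mathbb{Q}\}$, and is terser than you on the joint-convergence point, which you handle correctly.
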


\begin{proof}  It is enough to prove the convergence in law of \\
$\int  h(t) e^{-q x}  X^K_{Kt} (dx) dt\,$ to $\,\int  h(t) e^{-q x} \hat{n}_{Y_{t}}\, \delta_{Y_{t}}(dx) dt$ for a measurable bounded  function $h$ and $q\in \mathbb{Q}$. In \cite{skorohod}, it is proved that if $x_{K}$ converges to $x$ in $\mathbb{D}([0,T],\mathbb{R})$  embedded with the $M_{1}$-topology, then for  $t$ outside a denumerable set, $x_{K}(t)$ converges to $x(t)$. Then  it follows by Lebesgue's Theorem that $\int_{0}^T H(t,x_{K}(t))dt$ converges to $\int_{0}^T H(t,x(t))dt$, as soon as $H$ is bounded and continuous. We apply this result to the process $(\int_{\X} e^{-q x}  X^K_{Kt} (dx), t\geq 0)$ and the function
$$H_{M}(t,y) = h(t) (y \wedge M),$$ for any $M>0$.  Estimate \eqref{estimeemoment} (with $p=1$)  allows to conclude.
\end{proof}

\subsubsection{Marker distribution in a new  adaptive trait mutant population}\label{sec:MarkDistrib}

In this section, we study the transition of the marker distribution  when a new mutant adaptive  trait appears in a monomorphic population with trait $x_0$. We consider this phenomenon at the ecological time scale and we prove that the fixation of the mutant trait creates a genetical bottleneck.

\me Let $K$ be fixed.  Initially we have  a trait monomorphic population with trait $x_0$ and a marker distribution $\pi^K(x_0,du)$. Then an individual $(x_{0},v)$ from this population gives birth to   an individual  with mutant trait $y$  and marker $v$ ($v$ has been chosen according to $\pi^K(x_0,du)$). We consider the process $(\nu^K_t ; t\geq 0)$  started at
\begin{align*}
\nu^K_0(dx,du)= & X^K_0(dx)\pi^K_0(x,du)\\
=& \frac{1}{K}\delta_{(y,v)}(dx,du)+\frac{N^K_0-1}{K}\delta_{x_0}(dx)\pi^K_0(x_0,du).
 \end{align*}


\begin{prop}\label{prop:hitchhiking}Under Assumptions \ref{hypo} and \ref{IIF}, let us consider a mutant $(y,v)$ appearing in a monomorphic population with trait $x_0$ and marker distribution $\pi^K_0(x_0,du)$. Let us assume that $f(y ; x_0)>0$, where the fitness function has been defined in \eqref{def:fitness}. There exists $\varepsilon>0$ such that for any sequence $(t_K ; K\in \N^*)$ with $\lim_{K\rightarrow +\infty} t_K/\log K=+\infty$ and $\lim_{K\rightarrow +\infty} t_K/K=0$ (for example $t_K=(\log K)^2$), we have
\begin{equation}
\lim_{K\rightarrow +\infty}\P\big(\langle \nu^K_{t_K},\ind_{y}\rangle>\varepsilon\big)=\frac{f(y ; x_0)}{b(y)} \mbox{ and }\lim_{K\rightarrow +\infty}\P\big(\langle \nu^K_{t_K},\ind_{y}\rangle=0\big)=1-\frac{f(y ; x_0)}{b(y)}.\label{extinction-survie}
\end{equation}
Further, for the marker distribution, we can prove that
\begin{equation}
\lim_{K\rightarrow +\infty}\P\big(\pi^K_{t_K}(y,du)=\delta_v(du)\big)=\frac{f(y ; x_0)}{b(y)}.\label{distrib-marker}
\end{equation}
\end{prop}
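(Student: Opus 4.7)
The plan is to combine a branching-process comparison for the mutant subpopulation (yielding the survival/extinction dichotomy \eqref{extinction-survie}) with a Poisson bound on the number of marker mutations occurring along the mutant lineage (yielding the bottleneck \eqref{distrib-marker}). Throughout the interval $[0,t_K]$, the assumption $t_K=o(K)$ together with the large-deviation estimates for the logistic birth--death dynamics recalled in the appendix (and used in Champagnat \cite{champagnat06}) ensure that, for any fixed $\delta>0$, the trait-$x_0$ resident subpopulation size stays within a $\delta$-neighbourhood of $\widehat{n}_{x_0}K$ with probability tending to $1$.

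First I would study $Z^K_s=K\langle \nu^K_s,\mathbf{1}_{\{x=y\}}\rangle$, the size of the trait-$y$ mutant subpopulation. On the above event, and as long as $Z^K_s$ remains of size $o(K)$, the competition pressure felt by a trait-$y$ individual is $\eta(y)(C(y-x_0)\widehat{n}_{x_0}+o(1))$, so $Z^K$ is sandwiched between two linear birth--death branching processes with birth rate $b(y)(1-p_K)$ and death rates $d(y)+\eta(y)(C(y-x_0)\widehat{n}_{x_0}\pm\delta')$, whose Malthusian parameters both converge to $f(y;x_0)>0$ as $\delta\downarrow 0$. The classical extinction probability formula for a supercritical linear birth--death process then gives $\mathbb{P}(Z^K_{t_K}=0)\to 1-f(y;x_0)/b(y)$. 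On the complementary event, $Z^K$ reaches any prescribed level $\varepsilon K$ at a random time $\tau^\varepsilon_K$ with $\tau^\varepsilon_K/\log K\to 1/f(y;x_0)$ in probability, and the `invasion implies fixation' dynamics of \cite{champagnat06} then show that by time $t_K\gg \log K$ the mutant is either extinct or of macroscopic size $\geq\varepsilon K$. This proves \eqref{extinction-survie}.

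Next I would turn to the bottleneck \eqref{distrib-marker}. An individual in the mutant lineage carries a marker different from $v$ only if at least one of its ancestors underwent a marker mutation; conditionally on the genealogical birth times these mutations form a Poisson thinning of intensity $q_K$ per birth. Hence the total number $M^K_{t_K}$ of marker mutations in the mutant lineage up to time $t_K$ is stochastically dominated by a Poisson variable with mean $q_K\,b(y)\int_0^{t_K}Z^K_s\,ds$. Splitting this integral according to the phase decomposition of \cite{champagnat06}, during the supercritical branching phase $[0,\tau^\varepsilon_K]$ one has $\int_0^{\tau^\varepsilon_K}Z^K_s\,ds=O(K)$ (the total progeny of a supercritical linear birth--death process reaching size $N$ is of order $N$), and after $\tau^\varepsilon_K$ the mutant subpopulation stays of order $K$ for a further time of order $t_K$, contributing $O(Kt_K)$ to the integral. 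The expected number of marker mutations is therefore $O(q_K K t_K)=O(r_K t_K/K)$, which tends to $0$ for the stated example $t_K=(\log K)^2$ (and more generally whenever $r_K t_K=o(K)$). On the event $\{M^K_{t_K}=0\}$, every trait-$y$ individual carries the initial marker $v$, so $\pi^K_{t_K}(y,\cdot)=\delta_v$; combining with the previous step gives \eqref{distrib-marker}.

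The hardest part will be gluing together the three phases of the mutant dynamics (initial supercritical branching from a single individual, deterministic Lotka--Volterra macroscopic growth, and extinction of the resident) into a single Poisson bound on marker mutations, while propagating the $\delta$-error in the branching-process coupling and the stochastic fluctuations of $Z^K$ through the integral $\int_0^{t_K}Z^K_s\,ds$. A further delicate regime arises when $r_K t_K/K$ does not vanish, notably in the Wright--Fisher scaling $r_K\sim K$: there the exact identity $\pi^K_{t_K}(y,\cdot)=\delta_v$ should be read in the weak sense that the proportion of non-$v$ markers in the mutant lineage is $o(1)$ in probability, and one must additionally argue that each marker-mutant sublineage born inside the already established marker-$v$ population contributes only $o(K)$ individuals by time $t_K$. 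This refined control, combined with the survival probability from the first step, is what makes the proof of \eqref{distrib-marker} substantively harder than that of \eqref{extinction-survie}.
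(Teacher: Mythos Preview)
Your treatment of \eqref{extinction-survie} via branching-process comparison is fine and matches what the paper does (citing \cite{champagnat06,champagnatferrieremeleard2}).

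The gap is in your argument for \eqref{distrib-marker}. Your main step bounds the \emph{total} number of marker mutations $M^K_{t_K}$ in the mutant lineage by a Poisson variable of mean $O(q_K K t_K)=O(r_K t_K/K)$, and then claims this tends to $0$. It does not: for the paper's own leading example ($r_K=K^{3/2}$, $q_K=1/\sqrt K$, $t_K=(\log K)^2$) one gets $r_K t_K/K=\sqrt K(\log K)^2\to\infty$, and for the Wright--Fisher case $r_K\sim K$ one gets $(\log K)^2\to\infty$. The paper's Remark following the proof states this explicitly: many marker mutations \emph{do} occur in $[0,t_K]$. So the event $\{M^K_{t_K}=0\}$ has vanishing probability, and \eqref{distrib-marker} cannot be obtained this way as a literal equality of measures; it must be read as weak convergence $\pi^K_{t_K}(y,\cdot)\Rightarrow\delta_v$ on the survival event, which is what the paper's proof actually establishes.

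The paper's route is a semimartingale decomposition of $\langle\pi^K_{t_K}(y,\cdot),g\rangle-g(v)$ for test functions $g$. The drift is controlled by the scaling hypothesis \eqref{hyp-D}, giving a bound $t_K\bar b\|Ag\|_\infty/K\to 0$; this is where the smallness of marker-mutation \emph{effects} enters, an ingredient entirely absent from your proposal. The quadratic variation is then split at an intermediate time $s_K$ with $\log K\ll s_K\ll(\log K)^2$: on $[s_K,t_K]$ the mutant mass is macroscopic and the bound is $O(t_K/K)$; on $[0,s_K]$ the key estimate is that the variance of $g$ under $\pi^K_s(y,\cdot)$ is at most $s\,q_K\|g\|_\infty^2$, because the expected number of marker mutations \emph{along a single lineage} by time $s$ is $O(s\,q_K)\to 0$. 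This per-lineage count is the correct quantity, not the population total. Your final paragraph gestures at a proportion argument of this kind, but presents it as a patch for one regime rather than the main mechanism; promoting it to the core of the argument, together with \eqref{hyp-D} for the drift, is what is needed.
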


\me The equation \eqref{distrib-marker} tells us that when the mutant trait survives in the resident population of trait $x_0$, then by the time $t_K$ it needs to reach a non-negligible size, its marker distribution is still a Dirac mass at $y$. Additional comments are given after the proof.\\

\begin{proof} Properties \eqref{extinction-survie} have been proved in  \cite{champagnat06,champagnatferrieremeleard2} and depend only on the trait distribution. We consider test functions $\phi(x,u)$ of the form $\ind_{y}(x)g(u)$ with $g\in \Co^2(\U,\R)$ such that $\|g\|_\infty+\|g''\|_\infty\leq 1$. Starting from Proposition \ref{prop:nuK} and using Itô's formula with jumps, we obtain as soon as the population with trait $y$ survives,
\begin{align}
\int_{\U} g(u) \pi^K_{t_K}(y,du) = & \frac{\langle \nu^K_{t_K},\ind_{y}g\rangle}{\langle \nu^K_{t_K},\ind_y\rangle}\nonumber\\
= &   g(v)  + M^{K,g}_{t_K}
+  q_K (1-p_K)\, b(y) \int_0^{t_K} \left(1-\frac{1}{K\langle \nu^K_{s},\ind_{y}\rangle+1}\right)\label{etape7}\\
& \hspace{3cm} \times  \int_\U \pi^K_s(y,du)  \, \int_\U \big(g(u+h)-g(u)\big)G_K(u,dh) \ ds \nonumber
\end{align}
where $M^{K,g}$ is a square integrable martingale with previsible quadratic variation:
\begin{multline}
\langle M^{K,g}\rangle_{t_{K}} =  \frac{1}{K}\int_0^{t_{K}} ds \Big\{ \\
\begin{aligned}
& b(y)(1-q_K)(1-p_K)\frac{\langle \nu^K_s,\ind_y\rangle}{\big(\langle \nu^K_{s},\ind_{y}\rangle+\frac{1}{K}\big)^2}  \int_\U \big(g(u)-\langle \pi^K_s,g\rangle\big)^2\pi^K_s(y,du)   \\
+ & \big(d(y)+\eta(y) C*\nu^K_s(y) \big) \frac{\langle \nu^K_s,\ind_y\rangle}{\big(\langle \nu^K_{s},\ind_{y}\rangle-\frac{1}{K}\big)^2}\int_\U \big(g(u)-\langle \pi^K_s,g\rangle\big)^2\pi^K_s(y,du)\\
+&  b(y) q_{K}(1-p_K)  \frac{\langle \nu^K_s,\ind_y\rangle}{\big(\langle \nu^K_{s},\ind_{y}\rangle+\frac{1}{K}\big)^2} \int_\U\pi^K_s(y,du)  \int_\U G_K(u,dh) \big( g(u+h)-\langle \pi^K_s,g\rangle\big)^2  \bigg\}.\label{crochetetape7}
\end{aligned}\end{multline}

\me The third term in the right hand side of \eqref{etape7} is of order  $t_K/K$. Indeed thanks to \eqref{echelle} and \eqref{hyp-D},  it is upper bounded by
$$ {t_{K}\over K}\,\bar b\, \|Ag\|_{\infty} .$$
Similarly, the second term of \eqref{crochetetape7} tends to $0$ as ${t_{K}\over K}$.\\
The first term needs more attention. As soon as the mass $\langle \nu_s^K,\ind_{y}\rangle$ of the mutant population is of order 1, the variance of $M^{K,g}_{t_K}$ is in $t_K/K$ which tends to zero when $K\rightarrow +\infty$. However, since we start from 1 individual, we have to separate the time interval $[0,t_K ]$ into 2 parts. Let us introduce a sequence $(s_{K})$ such that  $s_{K} \leq t_{K}$ for any $K$ and
$$\log K \ll s_{K} \ll (\log K)^2.$$ Notice that $s_{K}$ can be equal to $t_{K}$.
Using Assumption \ref{IIF}, we can prove as in \cite[Lemma 3]{champagnat06} that there exists $\varepsilon_0>0$ such that
$$\lim_{K\to \infty} \P\Big(\forall s \in [s_{K}, t_{K}],\  \langle \nu^K_s,\ind_y \rangle  \geq \varepsilon_0\Big) = \frac{f(y;x_0)}{b(y)}.$$
It turns immediately out that
\begin{multline}
\E\left(\frac{\ind_{\{\forall s \in [s_K,t_K],\ \langle \nu^K_{s},\ind_y\rangle >0\}}}{K}\int_{s_{K}}^{t_{K}} \frac{b(y)+d(y)+\eta(y)C *\nu^K_s(y)}{\langle \nu^K_{s},\ind_{y}\rangle} \int_\U \big(g(u)-\langle \pi^K_s,g\rangle\big)^2\pi^K_s(y,du)  ds\right) \\ \leq C\, {t_{K}\over K}.\end{multline}
Before time $s_{K}$, the population size with trait $y$ is not large enough and $ \frac{1}{K\,\langle \nu^K_s,\ind_y\rangle} $ can only be upper bounded by $1$. Therefore we have to control the expectation of the variance of $g$ under $\pi^K_{s}$. The expected number of marker mutations at time $s$ along a lineage  is $s q_{K}$ and the variance of such mutation is bounded by
$\|g\|_\infty^2=\sup\{g(h)^2, h\in {\cal U}\}$.
Then
\begin{equation}
\E\left(\int_\U \big(g(u)-\langle \pi^K_s,g\rangle\big)^2\pi^K_s(y,du)\right)
\leq  s\, q_{K} \,\|g\|_\infty^2,\label{majo_transition}
\end{equation}
and
\ben
\E\left(\frac{1}{K}\int_{0}^{s_{K}}  \frac{b(y)+d(y)+\eta(y)C *\nu^K_s(y)}{\langle \nu^K_s,\ind_y\rangle} \int_\U \big(g(u)-\langle \pi^K_s,g\rangle\big)^2\pi^K_s(y,du)   ds\right) \leq C\, {(s_{K})^2 r_K \over K^2}.\een
\noindent
This concludes the proof.
\end{proof}

\begin{rem}For $q_K=1/\sqrt{K}$, let us notice that the rate of appearance of mutant markers in a population of size $K$ is of order $Kq_K=\sqrt{K}$ which does not tend to zero. This means that many mutant markers appear in the population of trait $y$ during the $t_K$ time interval following the first mutant $(y,v)$. However, heuristically, since in a tree the mass is concentrated around the leaves, the mutants do not appear with the same probability along the time interval and mutations are mostly observed after the time $s_K$ when the mutant population $(y,v)$ is already large. Moreover, using that the marker mutation step and/or marker mutation frequency is small we obtain that the mutant markers remain in negligible proportion between $s_K$ and $t_K$.
\end{rem}

\subsection{Convergence of the marker distribution process in a trait-monomorphic population}\label{section:convergence}

\me
For $K\in \N^*$, we introduce, as in \cite{champagnat06}, the following sequence of stopping times $\tau^K_k$ and $\theta^K_k$:
\begin{align*}
& \tau^K_0=  0, \qquad \theta^K_0=0\\
& \tau^K_{k+1}=\inf\{t>\tau^K_k,\ \card \big(\supp(\bar{X}^K_t)\big)=\card \big(\supp(\bar{X}^K_{t_-})\big)+1\}\\
& \theta^K_{k}=\inf\{t>\tau^K_k,\ \card \big(\supp(\bar{X}^K_t)\big)=1\}.
\end{align*}The times $\tau^K_k$'s are the times of appearance of the successive mutant traits in the population and the $\theta^K_k$'s are the times at which the population returns to monomorphic state. These times are possibly infinite, if the corresponding sets are empty. It has been proved in \cite{champagnat06} that for $t_K$ be such that $\lim_{K\rightarrow +\infty} t_K/\log(K)=+\infty$ and $\lim_{K\rightarrow +\infty} K t_K  =0$,
\begin{equation}\lim_{K\rightarrow +\infty}\P\Big(\forall k\geq 0, \tau^K_k\wedge KT \leq \theta^K_{k} \wedge KT \leq \big(\tau^K_k+t_K\big)\wedge KT\leq\tau^K_{k+1} \wedge KT\Big)=1.\label{etape8}
\end{equation}


\begin{prop}\label{prop:FV}Take the process $(\nu^K_{Kt} ; t\in [0,T])$ started with the monomorphic initial condition $\nu_0^K(dx,du)=n_0^K \delta_{(x_0,u_0)}(dx,du)$, where $\lim_{K\rightarrow +\infty}n_0^K= \widehat{n}_{x_0}>0$   and \\ $\sup_{K\in \N^*} \E((n^K_0)^3)<+\infty$.

\noindent (i) In the trait-mutation time scale, the time of first mutation converges in distribution as follows:
\begin{equation}
\lim_{K\rightarrow +\infty} \tau_1^K/K =\tau_1,\label{limitetau1}
\end{equation}where $\tau_1$ is an exponential time with parameter $b(x_0)\widehat{n}_{x_0}$.\\
(ii) Let us consider the processes $(\pi^K_{K(t\wedge \tau^K_1)} ; t\in [0,T])$ stopped at the time of first mutation. When $K\rightarrow +\infty$, this sequence converges in distribution in $\D([0,T],\mathcal{P}(\U))$ to the Fleming-Viot process $F^{u_0}(x_0,du)$ (see Definition \ref{def:FV}) and stopped at the independent exponential time $\tau_1$.\end{prop}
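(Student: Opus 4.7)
The plan is to exploit the fact that before the first trait-mutation time $\tau_1^K$, the population remains trait-monomorphic at $x_0$, so the total mass $\langle\nu^K_t,\mathbf{1}\rangle = N^K_t/K$ follows an autonomous logistic birth-death dynamics, and the marker distribution $\pi^K_t(x_0,du)$ can be analysed without interference from trait jumps. For part (i), the compensator of the trait-mutation point process up to ecological time $t$ is $\int_0^t p_K\, b(x_0)\, N^K_s\, ds$, which on the rescaled clock $Kt$ equals $\int_0^t b(x_0) (N^K_{Ks}/K)\, ds$ since $Kp_K = 1/K$. The large-population convergence of $N^K_\cdot/K$ to the logistic solution tending to $\widehat{n}_{x_0}$, together with the exit-time estimate of \cite{champagnat06} that $N^K_{Ks}/K$ stays in an $\varepsilon$-neighborhood of $\widehat{n}_{x_0}$ on time windows of order $\exp(VK)\gg KT$ with probability tending to $1$, identifies the limit compensator as $b(x_0)\widehat{n}_{x_0}\, t$; the standard convergence of counting processes whose compensators converge to a continuous increasing limit then yields $\tau_1^K/K \Rightarrow \tau_1 \sim \mathcal{E}(b(x_0)\widehat{n}_{x_0})$.

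For part (ii), I fix $g \in \mathcal{D}(A)$ and, starting from Proposition \ref{prop:nuK}, apply the change-of-variable formula for pure jump processes to the ratio $\langle\pi^K_t, g\rangle = \langle\nu^K_t,g\rangle/\langle\nu^K_t,\mathbf{1}\rangle$. The key algebraic observation is that the contributions from births without mutation and from deaths cancel identically in the drift (both reweight $\pi^K$ in a way that preserves its barycentre against $g$), so the only surviving drift comes from marker-mutation births and reads, on the $Kt$ scale, as
\[
K q_K\, b(x_0)\, \tfrac{N^K_{Ks}}{N^K_{Ks}+1}\int_\U \pi^K_{Ks}(du)\int_\U\bigl(g(u+h)-g(u)\bigr)G_K(u,dh).
\]
Since $Kq_K = r_K/K$, Assumption \eqref{hyp-D} gives uniform convergence of this drift to $b(x_0)\langle\pi_s, Ag\rangle$. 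The martingale part has predictable quadratic variation that, after the $K$-rescaling and using the stability of $N^K_{Ks}/K$ around $\widehat{n}_{x_0}$ together with the identity $\eta(x_0)C(0)\widehat{n}_{x_0} = b(x_0)-d(x_0)$, converges to $\frac{2 b(x_0)}{\widehat{n}_{x_0}} \int_0^t \bigl(\langle\pi_s,g^2\rangle - \langle\pi_s,g\rangle^2\bigr)ds$, exactly \eqref{crochet-PMB-FV}.

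Tightness of $(\pi^K_{K\cdot\wedge\tau_1^K})$ in $\mathbb{D}([0,T],\mathcal{P}(\U))$ is immediate for compact containment (since $\U$ is compact, $\mathcal{P}(\U)$ is compact for weak convergence), and the Aldous--Rebolledo criterion applied to $\langle\pi^K_{K\cdot},g\rangle$ for $g$ in a countable dense subset of $\mathcal{D}(A)$ follows from boundedness of $b,d,\eta,C$ and the moment estimate \eqref{estimeemoment}. Any limit point satisfies the Fleming--Viot martingale problem \eqref{PBM-FV}--\eqref{crochet-PMB-FV} with initial condition $\delta_{u_0}$, whose uniqueness identifies the limit as $F^{u_0}(x_0,\cdot)$. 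Joint convergence with $\tau_1^K/K$ follows from the asymptotic independence of the trait-mutation Poisson thinning (intensity $O(1)$ on the $Kt$ scale) and the birth/death/marker-mutation point processes that drive $\pi^K$, yielding independence of $\tau_1$ and the Fleming--Viot limit. The main obstacle is quantitative control of the bracket $\langle M^{K,g}\rangle$ uniformly over $[0,KT]$ in the original time: one must show that $N^K_{Ks}/K$ stays close to $\widehat{n}_{x_0}$ throughout the evolutionary window despite possibly starting from $n_0^K\neq\widehat{n}_{x_0}$, so that neither the bracket blows up (via the $1/N^K$ denominator hidden in the combinatorics of ratios) nor the initial $O(1)$ ecological transient contributes non-negligibly to the $Kt$-scale integrals. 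This is handled by a coupling with the deterministic logistic flow on a short initial window and the exit-time bound of \cite{champagnat06} thereafter, in the same spirit as the bounds already used for Proposition \ref{prop:hitchhiking}.
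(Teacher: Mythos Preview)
Your proposal is correct and follows essentially the same route as the paper: It\^o's formula applied to the ratio $\langle\nu^K,g\rangle/\langle\nu^K,\mathbf{1}\rangle$, identification of the drift via \eqref{hyp-D} and of the bracket via the stability of $N^K_{K\cdot}/K$ near $\widehat n_{x_0}$, tightness, and identification of the limit through uniqueness of the Fleming--Viot martingale problem. The only notable technical difference is in the handling of the stopping time and independence: rather than arguing asymptotic independence of the trait-mutation thinning directly, the paper introduces an auxiliary coupled process $\widetilde\pi^{K,x_0}$ obtained by setting the trait-mutation kernel to a Dirac mass (so $\widetilde\pi^{K,x_0}$ coincides with $\pi^K(x_0,\cdot)$ on $[0,\tau_1^K]$ and is genuinely independent of $\tau_1^K$), proves convergence of the unstopped $\widetilde\pi^{K,x_0}_{K\cdot}$, and then stops at the independent limit $\tau_1$; this makes the decoupling completely explicit. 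For tightness the paper uses Roelly's criterion instead of Aldous--Rebolledo, and for the short initial window it also invokes the marker-variance bound \eqref{majo_transition} from Proposition~\ref{prop:hitchhiking}, but with the present initial condition $n_0^K\to\widehat n_{x_0}>0$ your exit-time control already suffices.
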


\begin{proof}First of all, the trait and marker mutations are independent. Thus, the stopping time $\tau_1^K$ is independent of the marker distribution $\pi^K(x_0,du)$. The results of Champagnat and coauthors \cite{champagnat06,champagnatferrieremeleard2} are unchanged and give \eqref{limitetau1}. Moreover, by \cite[Lemma 5.4]{champagnatferrieremeleard2}
\begin{equation}\lim_{K\rightarrow +\infty}\P\Big(\sup_{s\in [\log K, \tau^K_1]}\langle X^K_s, {\bf 1}\rangle \geq \frac{\widehat{n}_{x_0}}{2}\Big)=1.
\label{hyp-mass}\end{equation}

\noindent Let $\phi\in \Co(\U,\R)$. Since the population is trait-monomorphic with trait $x_0$, then\\  $\langle \pi^K_{Kt}(x_0,du),\phi(u)\rangle = \frac {\langle \nu^K_{Kt},\phi\rangle}  {\langle \nu^K_{Kt}, {\bf 1}\rangle}$. Thus, from Proposition \ref{prop:nuK} and It\^o's formula, we get that in the time scale $K t$
\begin{multline}\label{etape4}
\langle \pi^K_{K(t\wedge \tau^K_1)}(x_0,.),\phi\rangle
=  \langle \pi^K_0(x_0,.),\phi\rangle + H^{K,\phi}_{K(t\wedge \tau^K_1)}
+  b(x_0) q_K(1-p_K)\\\times \int_0^{t\wedge \tau^K_1}
\Big(1-\frac{1}{K\langle \nu^K_{Ks},1\rangle+1}\Big) \frac{r_K}{K}\int_{\U\times \U} \big(\phi(u+h)-\phi(u)\big)G_K(u,dh) K\pi^K_{Ks}(x_0,du) ds
\end{multline}
where $ H^{K,\phi}$ is a square integrable martingale with quadratic variation is
\begin{multline}
\langle H^{K,\phi}\rangle_{K(t\wedge \tau^K_1)}=  \int_0^{t\wedge \tau^K_1} b(x_0)(1-q_K)(1-p_K)\frac{\langle \nu^K_{Ks},\ind_{x_0}\rangle}{\big(\langle \nu^K_{Ks},{\bf 1}\rangle+\frac{1}{K}\big)^2} \times \\
\begin{aligned}
 & \hspace{6cm} \int_{\U} \pi^K_{Ks}(x_0,du) \big(\phi(u)-\langle \pi^K_{Ks}(x_0,.),\phi\rangle \big)^2\\
 +  & \big(d(x_0)+\eta(x_0) C*X^K_{Ks}(x_0)\big)\frac{\langle \nu^K_{Ks},\ind_{x_0}\rangle}{\big(\langle \nu^K_{Ks},{\bf 1}\rangle-\frac{1}{K}\big)^2}\times \\
& \hspace{6cm} \int_{\U}  \big(\phi(u)-\langle \pi^K_{Ks}(x_0,.),\phi\rangle \big)^2\\
 + & b(x_0)q_K(1-p_K) \frac{\langle \nu^K_{Ks},\ind_{x_0}\rangle}{\big(\langle \nu^K_{Ks},{\bf 1}\rangle+\frac{1}{K}\big)^2}\int_\U \pi^K_{Ks}(x_0,du)  \big(\phi(u+h)-\langle \pi^K_{Ks}(x_0,.),\phi\rangle \big)^2
\end{aligned}
\label{crochetetape4}
\end{multline}



\me The computation shows that the order of the quadratic variation of $\pi^K_{t}$  is ${1\over K}$. Thus at  time scale $K t$, this order will be $1$. That justifies Assumption \eqref{echelle} for $p_{K}$ which is the only choice to get a non degenerate diffusive  limit.

\me Let us introduce a process $(\widetilde{\pi}^{K,x_0}_t(du), t\geq 0)$ coupled with $(\pi^{K}_t(x_0,du),t\geq 0)$, on the same probability space and driven by the same Poisson point measures, that satisfies the following properties. The dynamics of $\widetilde{\pi}^{K,x_0}_.(du)$ is given by \eqref{etape4}-\eqref{crochetetape4} but without the stopping times $\tau_1^K$ and we have that $\forall t\geq 0,\ \pi^{K}_{K(t\wedge \tau_1^K)}(x_0,du)=\widetilde{\pi}^{K,x_0}_{K(t\wedge \tau_1^K)}(du)$. In a nutshell, $(\widetilde{\pi}^{K,x_0}_t,t\geq 0)$ corresponds to the process $(\pi^{K}_t(x_0,.),t\geq 0)$ that is obtained by setting the trait mutation kernel to the Dirac mass at 0.

\me
\noindent Thanks to \eqref{hyp-D}, \eqref{hyp-mass} and using that $\widetilde{\pi}^{K,x_0}$ is a probability-valued process, \eqref{etape4} and \eqref{crochetetape4} imply that for any $\phi\in \Co(\U,\R)$, the distribution sequence of $(\langle \widetilde{\pi}^{K,x_0}_{K.},\phi\rangle ; K\in \N^*)$ is uniformly tight in $\D([0,T],\R)$. By Roelly's criterion \cite[Theorem 2.1]{roelly}, this implies the uniform tightness of the sequence of the laws of $(\widetilde{\pi}^{K,x_0}_{K.} ; K\in \N^*)$ in $\D([0,T],\mathcal{P}(\U))$.\\
Let us consider a limiting value $(\bar{\pi}_t(du) ; t\in [0,T])$ of the tight sequence and a subsequence, again denoted by $\widetilde{\pi}^{K,x_0}_{K.}(du)$,  that converges to $\bar{\pi}_.(du)$.  By Assumption \eqref{hyp-mass} and since individuals have weight $1/K$, the limiting laws only charge $\Co([0,T],\mathcal{P}(\U))$.

\me \noindent It remains to identify $\bar{\pi}_.(du)$. Let $0<s<t<T$, let $k\in \N$ and $0<s_1\leq \cdots s_k<s<t$, let $\phi_1,\cdots \phi_k$ be bounded continuous function on $\mathcal{P}(\X\times \U)$ and $\phi\in \Co(\U,\R)$. We define the following bounded functional on $\D([0,T],\mathcal{P}(\U))$
\begin{align*}
\Psi_{s,t}(Y)= & \phi_1(Y_{s_1})\cdots \phi_k(Y_{s_k})\Big\{\langle Y_t,\phi\rangle - \langle Y_s,\phi\rangle -\int_s^t du \int_{\U} Y_u(du)b(x_0)A\phi(u)\Big\}
\end{align*}
On the one hand, using \eqref{etape4}, we obtain that
\begin{align*}
\Psi_{s,t}(\widetilde{\pi}^{K,x_0}_{K.})=\phi_1(\widetilde{\pi}^{K,x_0}_{Ks_1})\cdots \phi_k^K(\widetilde{\pi}^{K,x_0}_{Ks_k})\Big\{H^{K,\phi}_{Kt}-H^{K,\phi}_{Ks}
 +  \varepsilon_{Kt}^K
\Big\}
\end{align*}where
\begin{align}
\varepsilon^K_{Kt}= & \int_0^{t} ds  \int_{\U} \widetilde{\pi}^{K,x_0}_{Ks}(du) \Big[b(x_0) \frac{r_K}{K}\int_{\U} \big(\phi(u+h)-\phi(u)\big)G_K(u,dh)\Big]\nonumber\\
- &  \int_0^{t} ds  \int_{\U} \widetilde{\pi}^{K,x_0}_{Ks}(du) b(x_0) A \phi(u)\label{etape5}
\end{align}tends to 0 in $\mathbb{L}^1$ when $K\rightarrow +\infty$.
Thus,
\begin{equation}
\lim_{K\rightarrow +\infty}\E\Big(\Psi_{s,t}(\widetilde{\pi}^{K,x_0}_{K.})\Big)=0.
\end{equation}
On the other hand, using \eqref{estimeemoment} and the convergence of $(\widetilde{\pi}^{K,x_0}_{K(.\wedge \tau^K_1)}(du) ; K\in \N^*)$ to $\bar{\pi}\in \Co([0,T],\mathcal{M}_F(\U))$, we get
\begin{align}
\E\big(\Psi_{s,t}(\bar{\pi})\big)=\lim_{K\rightarrow +\infty}\E\big(\Psi_{s,t}(\widetilde{\pi}^{K,x_0}_{K.}(du))\big).\label{etape2}
\end{align}
This shows that $\E\big(\Psi_{s,t}(\bar{\pi})\big)=0$ and hence the process $M^{x_0}(\phi)$ defined in \eqref{PBM-FV} is a martingale obtained as the uniform limit in time of $H^{K,\phi}_{Kt}$, when $K\rightarrow +\infty$. Moreover, the  bracket \eqref{crochetetape4} converges to
\begin{multline}
\int_0^t \frac{b(x_0)+d(x_0)+\eta(x_0) \widehat{n}_{x_0}}{\widehat{n}_{x_0}}\int_{\U} \bar{\pi}(du)\Big[
\big(\phi(u)-\langle \bar{\pi}_{s},\phi\rangle \big)^2 \Big] ds \\
=  \int_0^t \frac{2b(x_0)}{\widehat{n}_{x_0}} \Big[ \langle \bar{\pi}_s,\phi^2\rangle -\langle \bar{\pi},\phi\rangle ^2 \Big] ds.
\label{etape6}
\end{multline}
Indeed, the integral in \eqref{crochetetape4} can be separated into two integrals, one between $0$ and $\frac{\log K}{K} \wedge t \wedge \tau_1^K$ and the other between $\frac{\log K}{K} \wedge t \wedge \tau_1^K$ and $t \wedge \tau_1^K$. The second integral converges to \eqref{etape6}, but some caution is needed for the first integral since the ratios $\langle \nu^K_{Ks},\ind_{x_0}\rangle/\big(\langle \nu^K_{Ks},{\bf 1}\rangle \pm \frac{1}{K}\big)^2$ are of order $K$. Using the same arguments as for \eqref{majo_transition}, we can upper bound the integral between $0$ and $\frac{\log K}{K} \wedge t \wedge \tau_1^K$ by
$$ C K \int_0^{\log K/K} s q_K ds=C\frac{(\log K)^2}{K} q_K\rightarrow_{K\rightarrow +\infty} 0.$$

Using Theorem 3.12 p. 432 of \cite{jacod}, that provides the convergence of $H^{K,\phi}_{K.}$ to the solution of the martingale problem \eqref{PBM-FV}-\eqref{crochet-PMB-FV} with $x=x_{0}$.

 \noindent By the independence of $\widetilde{\pi}^{K,x_0}_{K.}(du)$ and $\tau_1^K$, $\tau_1$ is independent of $\bar{\pi}_.(du)$ and $\bar{\pi}_{.\wedge \tau_1}=\pi^{u_0}_{.\wedge \tau_1}(x_0,du)$. This concludes the proof.
\end{proof}

\subsection{Conclusion}

Using Theorem \ref{thm:TSS}, Proposition \ref{prop:hitchhiking} and Proposition \ref{prop:FV}, we prove the first part of Theorem \ref{thm:SFVP}, for the convergence in finite distribution. Let us prove the convergence in the sense of occupation measures.

\begin{cor}\label{lem2}
The family $(\nu^K_{Kt}(dx,du)\ dt, K\in \N^*)$ is uniformly tight in $\mathcal{M}_F(\X \times \mathcal{U}\times [0,T])$ embedded with the weak convergence topology and converges in distribution to the measure $V_t(dx,du)dt$, where $V$ is defined in Theorem \ref{thm:SFVP}.
\end{cor}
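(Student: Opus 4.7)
The proof mirrors that of Corollary~\ref{corollary:occup-X}: first establish tightness of the family of random measures $(\nu^K_{Kt}(dx,du)\,dt)_K$ in $\mathcal{M}_F(\X\times\U\times[0,T])$, then identify every subsequential limit.

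\textbf{Tightness.} Since $\X,\U\subset\R$ are compact, $\X\times\U\times[0,T]$ is a compact Polish space, so tightness for the weak topology on $\mathcal{M}_F(\X\times\U\times[0,T])$ reduces to tightness of the total masses as real random variables. The total mass equals $\int_0^T\langle\nu^K_{Kt},\mathbf{1}\rangle\,dt$, whose expectation is bounded by $T\cdot\sup_{K,t}\E[\langle\nu^K_{Kt},\mathbf{1}\rangle]<+\infty$ thanks to~\eqref{estimeemoment}. This $\mathbb{L}^1$-boundedness yields the desired tightness.

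\textbf{Identification.} Let $\Gamma$ be any subsequential limit. By a Stone--Weierstrass argument on the compact space $[0,T]\times\X\times\U$, $\Gamma$ is characterized in distribution by its evaluation against tensor test functions $H(t,x,u)=h(t)\phi(x)\psi(u)$ with $h\in C_b([0,T])$, $\phi\in C_b(\X)$ and $\psi\in C_b(\U)$. Hence it is enough to prove
\[
\int_0^T h(t)\,\langle\nu^K_{Kt},\phi\otimes\psi\rangle\,dt \;\xrightarrow[K\to\infty]{}\; \int_0^T h(t)\,\widehat n_{Y_t}\,\phi(Y_t)\,\langle F^{U_t}_t(Y_t,\cdot),\psi\rangle\,dt
\]
in distribution.

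\textbf{Reduction to inter-mutation intervals.} I would split $[0,T]$ at the rescaled trait-mutation times $\tau_k^K/K$ into transition windows $[\tau_k^K/K,(\tau_k^K+t_K)/K]$ of length $t_K/K\to 0$ and stable windows $[(\tau_k^K+t_K)/K,\tau_{k+1}^K/K]$, using~\eqref{etape8}. On each transition window the contribution to the integral is uniformly bounded by $\|h\|_\infty\|\phi\|_\infty\|\psi\|_\infty\cdot(t_K/K)\cdot\sup_{K,t}\E[\langle\nu^K_{Kt},\mathbf{1}\rangle]$, which vanishes. On each stable window, Proposition~\ref{prop:hitchhiking} ensures the population is trait-monomorphic with trait $Y_k$ and marker distribution concentrated near $\delta_{U_k}$ at the left endpoint, where $U_k$ is the marker of the successful mutant; then Proposition~\ref{prop:FV}, applied inductively through the Markov property at each $\tau_k^K+t_K$, yields the convergence of $\pi^K_{Kt}(Y_k,du)$ to the Fleming--Viot process $F^{U_k}_{t}(Y_k,du)$, while Theorem~\ref{thm:TSS} controls the trait marginal.

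\textbf{Main obstacle.} The hard part will be gluing these inter-mutation convergences coherently, verifying that the random initial marker $U_k$ driving the Fleming--Viot dynamics on the $k$-th stable window is distributed as required by the SFVP jump kernel~\eqref{taux}, namely sampled from $F^{U_{k-1}}_{\tau_k^{-}}(Y_{k-1},\cdot)$. This matching is exactly what the bottleneck assertion~\eqref{distrib-marker} of Proposition~\ref{prop:hitchhiking} delivers: after the transition $\pi^K$ is close to $\delta_{U_k}$, and $U_k$ was chosen according to the marker distribution at the mutation time. Once this matching is secured, the moment bound~\eqref{estimeemoment} supplies the uniform integrability needed to pass to the limit inside the time integral, identifying $\Gamma = V_t(dx,du)\,dt$ in distribution.
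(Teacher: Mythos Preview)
Your proposal is correct and follows essentially the same route as the paper: compactness plus the moment bound~\eqref{estimeemoment} for tightness, then a decomposition of $[0,T]$ into short transition windows (handled via~\eqref{etape8}) and stable trait-monomorphic windows (handled via Propositions~\ref{prop:hitchhiking} and~\ref{prop:FV}). The only cosmetic differences are that the paper uses the stopping times $\theta^K_k$ rather than $\tau^K_k+t_K$ as window endpoints, invokes the already-established finite-dimensional convergence to shortcut your ``gluing'' discussion, and appeals explicitly to Corollary~\ref{corollary:occup-X} (the occupation-measure convergence of the trait marginal) together with dominated convergence to pass to the limit in the time integral.
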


\begin{proof}
Since the space $\X\times \U\times [0,T]$ is compact, it is sufficient to prove that
$$\sup_{K\in \N^*} \E\big(\int_0^T \langle \nu_{Kt}^K,1\rangle dt\big)<+\infty$$ which is a consequence of Fubini's theorem since
\begin{align*}
\E\big(\int_0^T \langle \nu_{Kt}^K,1\rangle dt\big) \leq T \sup_{K\in \N^*, t\in \R_+}\E\big(\langle \nu^K_t,1\rangle\big).
\end{align*} Estimate \eqref{estimeemoment} concludes the proof of tightness.\\


\noindent Let us now consider continuous functions $\phi\in \Co(\X\times [0,T],\R)$ and $g\in \Co(\U,\R)$, where the stopping times $\tau^K_k$ and $\theta^K_k$ have been introduced in Section \ref{section:convergence}. Then
\begin{multline}
\int_0^T \int_{\X\times \U} \phi(x,t)g(u)\nu^K_{Kt}(dx,du)\ dt =  \int_0^T \langle \pi^K_{Kt}(x,.),g\rangle \phi(x,t)X^K_{Kt}(dx)dt\\
=  \sum_{k\geq 0}  \int_{\theta^K_k}^{\tau^K_{k+1}} \langle \pi^K_{Kt}(x,.),g\rangle \phi(x,t)X^K_{Kt}(dx)dt
 +    \int_{\tau^K_{k+1}}^{\theta^K_{k+1}} \langle \pi^K_{Kt}(x,.),g\rangle \phi(x,t)X^K_{Kt}(dx)dt,\label{etape9}
\end{multline}The limit \eqref{etape8} implies that the second term of the right hand side of \eqref{etape9} converges to 0. Given $X^K$, the processes $(\pi^K_{Kt}(x,.) ; t\in [\theta^K_k, \tau^K_{k+1}))$, for $k\geq 0$, in the first term of the r.h.s. of \eqref{etape9} are independent and, by Proposition \ref{prop:FV}, they converge in distribution in $\D([0,T],\R)$ to the Fleming-Viot processes \eqref{PBM-FV}-\eqref{crochet-PMB-FV} with the initial conditions described by the jumps of the extended TSS $(Y,U)$.
Corollary \ref{corollary:occup-X} and dominated convergence theorem allows us to conclude the proof.
\end{proof}




{\footnotesize
\providecommand{\noopsort}[1]{}\providecommand{\noopsort}[1]{}\providecommand{\noopsort}[1]{}\providecommand{\noopsort}[1]{}

}
\appendix

\section{Limit theorems for the trait and marker distributions in the ecological time scale}

If we let $K\rightarrow +\infty$ without changing the time scale, we obtain:
\begin{prop}
\label{prop:gdepop}
Assume that the sequence $(\nu^K_0(dx,du) ; K\in \N^*)$ converges in probability to the measure $\xi_0(dx,du)$ when $K\rightarrow +\infty$. Then the sequence of processes $(\nu^K_t(dx,du) ; t\geq 0)_{K\in \N^*}$ converges in $\mathbb{D}(\R_+,\mathcal{M}_F(\X\times \U))$ to the deterministic process $\xi\in \Co(\R_+,\mathcal{M}_F(\X\times \U))$ defined for every $\phi(x,u)\in \Co(\X\times \U,\R)$ by:
\begin{equation}
\langle \xi_t,\phi\rangle=\langle \xi_0,\phi\rangle+ \int_0^t \int_{\X\times \U}\big(b(x)-d(x)-\eta(x)C*\xi_s(x) \big)\phi(x,u)\xi_s(dx,du)\ ds.
\end{equation}
\end{prop}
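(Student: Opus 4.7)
The plan is to follow the classical strategy for large-population limits of measure-valued birth-death-mutation processes, in the spirit of Fournier--M\'el\'eard and Champagnat et al.: (i) start from a semimartingale decomposition of $\langle \nu^K_t, \phi\rangle$, (ii) prove tightness of $(\nu^K)_K$ in $\mathbb{D}(\R_+, \mathcal{M}_F(\X\times\U))$, (iii) identify every accumulation point as a solution of the deterministic integral equation, and (iv) conclude by a uniqueness argument. The key simplification compared with Theorem \ref{thm:SFVP} is that the ecological time scale is not accelerated, so $p_K, q_K \to 0$ kills the mutation contributions, while the $1/K$ weighting of individual masses combined with the moment bound \eqref{estimeemoment} forces the martingale part to vanish in $L^2$.

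Concretely, Proposition \ref{prop:nuK} gives, for $\phi\in \Co(\X\times\U,\R)$,
\begin{equation*}
\langle \nu^K_t,\phi\rangle = \langle \nu^K_0,\phi\rangle + \int_0^t\!\!\int_{\X\times\U}(B^K-D^K(X^K_s))\phi(x,u)\,\nu^K_s(dx,du)\,ds + M^{K,\phi}_t,
\end{equation*}
with $\E[\langle M^{K,\phi}\rangle_t]\le C\|\phi\|_\infty^2 t/K \to 0$. From \eqref{echelle} we see that $B^K\phi(x,u)$ converges uniformly to $b(x)\phi(x,u)$, whereas $D^K(X^K_s)\phi(x,u) = (d(x) + \eta(x) C*X^K_s(x))\phi(x,u)$ is $K$-independent. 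For tightness I would apply the Aldous--Rebolledo criterion to the real semimartingales $\langle \nu^K_\cdot,\phi\rangle$: the drift is uniformly controlled by the boundedness of $b$, $d$, $\eta$, $C$ together with the $L^p$ bound \eqref{estimeemoment}, and the bracket is $O(1/K)$. Since $\X\times\U$ is compact, Roelly's criterion \cite{roelly} (as invoked later in the proof of Proposition \ref{prop:FV}) then yields tightness of the measure-valued processes in $\mathbb{D}(\R_+, \mathcal{M}_F(\X\times\U))$.

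For the identification of the limit, let $\xi$ be the limit of a convergent subsequence. Since the jumps of $\nu^K$ have size $1/K$, $\xi$ takes values in $\Co(\R_+, \mathcal{M}_F(\X\times\U))$. In the semimartingale equation, $M^{K,\phi}_t \to 0$ in $L^2$, $\langle \nu^K_0,\phi\rangle \to \langle \xi_0,\phi\rangle$ in probability by assumption, and the mutation portion of the drift is of order $p_K + q_K \to 0$ uniformly. The main subtlety, which I expect to be the main obstacle, is passing to the limit in the nonlinear quadratic term $\int_0^t\!\int \eta(x) C*X^K_s(x)\phi(x,u)\,\nu^K_s(dx,du)\,ds$. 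This is handled by noting that the map $(\nu,x)\mapsto C*\nu(x)$ is jointly continuous on $\mathcal{M}_F(\X\times\U)\times \X$ equipped with the weak topology (since $C$ is continuous and bounded on the compact set $\X$), and then using the moment control \eqref{estimeemoment} and dominated convergence to pass to the limit in the time integral. This identifies $\xi$ as a solution of the stated equation. Uniqueness is then proved by a standard Gronwall argument: taking the difference of two solutions $\xi^1, \xi^2$ with the same initial condition and testing against $\phi$ with $\|\phi\|_\infty\le 1$, the Lipschitz dependence of $b-d-\eta C*\xi$ in $\xi$ on bounded-mass measures (uniform in $\phi$) yields an inequality of the form $\sup_{\|\phi\|_\infty\le 1}|\langle \xi^1_t-\xi^2_t,\phi\rangle| \le C\int_0^t \sup_{\|\phi\|_\infty\le 1}|\langle \xi^1_s-\xi^2_s,\phi\rangle|\,ds$. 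Uniqueness then promotes subsequential convergence to convergence of the whole sequence, and the limit $\xi$ inherits continuity in time from the fact that it solves a non-explosive integral equation with bounded drift.
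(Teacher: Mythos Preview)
Your proposal is correct and follows exactly the classical tightness--uniqueness strategy that the paper invokes: the paper's own proof consists of a single sentence referring to Fournier--M\'el\'eard \cite{fourniermeleard}, and your argument is precisely the detailed execution of that scheme (semimartingale decomposition from Proposition~\ref{prop:nuK}, Aldous--Rebolledo plus Roelly for tightness, vanishing martingale and mutation terms, continuity of the quadratic competition term, and Gronwall for uniqueness).
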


\begin{proof}The proofs proceed in a classical tightness-uniqueness way (cf. Fournier-M\'el\'eard \cite{fourniermeleard}).
\end{proof}

\noindent Notice that no mutation can be seen at this scale. To see  the trait  mutations, we have to consider the process at the
mutation scale $Kt$ (cf. \cite{champagnatferrieremeleard2}).

\paragraph{Trait-monomorphic case}

From Proposition \ref{prop:gdepop}:
\begin{cor}Assume that the initial population is trait-monomorphic
$
\nu^K_0(dx,du)=n_0^K \delta_{x_0}(dx) \pi^K_0(x_0,du)$
where $\lim_{K\rightarrow +\infty} n_0^K = n_0$ and $\lim_{K\rightarrow +\infty}\pi^K(x_0,du)= \pi_0(x_0,du)$ in probability. Then the sequence $(\nu^K ; K\in \N^*)$ converges, in probability and uniformly on every compact time interval $[0,T]$ with $T>0$, to $(\nu_t(dx,du)=n_t(x) \delta_{x_0}(dx)\pi_0(x_0,du) ; t\geq 0)$
where $n_t(x)$ is the deterministic solution of the logistic equation
\begin{equation}
\frac{dn_t}{dt} =\big(b(x)-d(x)-\eta(x) C(0)n_t(x)\big) n_t (x)
\end{equation}
which converges when $t$ tends to infinity to
\begin{equation}
\widehat{n}_x =\frac{
b(x)-d(x)}{\eta(x)C(0)}.
\end{equation}
\end{cor}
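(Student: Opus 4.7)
The plan is to specialize Proposition \ref{prop:gdepop} to the trait-monomorphic initial condition and then identify the limit via a product ansatz. Under the hypotheses, $\nu^K_0 \to \xi_0 := n_0\,\delta_{x_0}(dx)\,\pi_0(x_0,du)$ in probability in $\mathcal{M}_F(\X\times\U)$, so Proposition \ref{prop:gdepop} applies and yields convergence of $(\nu^K_t)_{t\geq 0}$ in $\mathbb{D}(\R_+,\mathcal{M}_F(\X\times\U))$ to the unique deterministic path $\xi \in \Co(\R_+,\mathcal{M}_F(\X\times\U))$ satisfying the limiting integral equation.

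Next, I would propose the tensor ansatz $\xi_t(dx,du) = n_t\,\delta_{x_0}(dx)\,\pi_0(x_0,du)$ where $n_t$ solves the scalar logistic ODE $\dot n_t = n_t\bigl(b(x_0)-d(x_0)-\eta(x_0)C(0)n_t\bigr)$ with initial value $n_0$. Since $\xi_s$ is concentrated on $\{x_0\}\times\U$, one has $C*\xi_s(x_0) = C(0)n_s$; inserting $\xi$ into the integral equation of Proposition \ref{prop:gdepop} and testing against $\phi\in\Co(\X\times\U,\R)$, the common factor $\int_\U \phi(x_0,u)\,\pi_0(x_0,du)$ divides out and the equation reduces to the integral form of the scalar logistic ODE for $n_t$. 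By the uniqueness statement in Proposition \ref{prop:gdepop}, the limit $\xi$ must coincide with this ansatz, giving the claimed form of the limit.

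To upgrade the mode of convergence, I would use that $\xi$ is continuous in $t$, so Skorokhod $J_1$-convergence in $\mathbb{D}(\R_+,\mathcal{M}_F(\X\times\U))$ automatically becomes uniform convergence on every compact time interval $[0,T]$; and since the limit is deterministic, convergence in law is equivalent to convergence in probability, yielding the stated mode of convergence. The long-time behavior $n_t \to \widehat{n}_{x_0}$ is classical: the logistic ODE $\dot n = n(\alpha-\beta n)$ with $\alpha = b(x_0)-d(x_0)$ and $\beta = \eta(x_0)C(0)>0$ admits the positive equilibrium $\widehat{n}_{x_0} = \alpha/\beta$ as a globally attracting point on $(0,\infty)$, with monotone convergence.

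There is no substantial obstacle beyond what Proposition \ref{prop:gdepop} already delivers: the probabilistic content (tightness of $\nu^K$, identification of the limiting martingale problem, uniqueness of the measure-valued ODE) has been absorbed in that statement. The only genuinely new ingredient is the direct verification of the product ansatz, which is straightforward and exploits the key structural fact that in the ecological time scale the mutation probabilities $p_K,q_K$ vanish, so neither the trait support nor the conditional marker distribution can move — only the total mass evolves, and it does so according to the classical logistic equation.
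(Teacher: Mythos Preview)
Your argument is correct and follows the same route as the paper: specialize Proposition \ref{prop:gdepop} to the monomorphic initial condition, identify the limit via the product ansatz $n_t\,\delta_{x_0}\otimes\pi_0$, and invoke uniqueness. The paper's own proof is in fact much terser --- it simply cites \cite{champagnatferrieremeleard2} --- so your explicit verification of the ansatz, the upgrade from $J_1$-convergence to uniform convergence on compacts via continuity of the limit, and the passage from convergence in law to convergence in probability via determinism of the limit are all welcome details that the paper leaves implicit.
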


\begin{proof}For the part of the proof dealing with $\xi$, we refer to \cite{champagnatferrieremeleard2}.
\end{proof}

\paragraph{Trait-dimorphic case}

From Proposition \ref{prop:gdepop}:
\begin{cor}Assume that the initial population is trait-dimorphic
$$\nu^K_0(dx,du)=n_0^K(x_1) \delta_{x_1}(dx) \pi^K_0(x_1,du)+n_0^K(x_2) \delta_{x_2}(dx) \pi^K_0(x_2,du)$$
where $n^K_0(x)$ is the number of individuals with trait $x$ renormalized by $K$. We assume that for $x\in \{x_1,x_2\}$, $\lim_{K\rightarrow +\infty} n_0^K(x) = n_0(x)>0$ and $\lim_{K\rightarrow +\infty}\pi^K(x,du)= \pi_0(x,du)$ in probability.
 We also assume that $(x_1, x_2)$ satisfies the Assumption \ref{IIF}.\\
Then the sequence $(\nu^K ; K\in \N^*)$ converges, in probability and uniformly on every compact time interval $[0,T]$ with $T>0$, to\\
 $(\nu_t(dx,du)=n_t(x_1) \delta_{x_1}(dx)\pi_0(x_1,du)+n_t(x_2) \delta_{x_2}(dx)\pi_0(x_2,du) ; t\geq 0)$\\
 where $(n_t(x_1), n_t(x_2))$ solves the system
\begin{align}
& \frac{dn_t(x_1)}{dt} =\big(b(x_1)-d(x_1)-\eta(x_1) (C(0)n_t(x_1)+C(x_1-x_2)n_t(x_2))\big) n_t (x_1)\nonumber\\
& \frac{dn_t(x_2)}{dt} =\big(b(x_2)-d(x_2)-\eta(x_2) (C(x_2-x_1)n_t(x_1)+C(0)n_t(x_2))\big) n_t (x_2).\label{largepop-dimorphique}
\end{align}
whose only stable equilibrium is $(0,\widehat{n}_{x_2})$, with $\widehat{n}_{x_2}$ defined in \eqref{nchap}.
\end{cor}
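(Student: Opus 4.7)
The plan is to deduce this corollary as a direct application of Proposition \ref{prop:gdepop} combined with the classical phase-plane analysis of the two-dimensional competitive Lotka-Volterra system under Assumption \ref{IIF}.

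First I would apply Proposition \ref{prop:gdepop} with the dimorphic initial condition. Since the assumed convergence gives $\nu^K_0 \to n_0(x_1)\delta_{x_1}(dx)\pi_0(x_1,du) + n_0(x_2)\delta_{x_2}(dx)\pi_0(x_2,du) =: \xi_0$ in probability, the proposition yields that $(\nu^K_t, t\ge 0)$ converges in probability (uniformly on compacts, since $\xi$ is continuous and deterministic) to the unique solution $(\xi_t, t\ge 0)$ of the deterministic equation appearing in Proposition \ref{prop:gdepop}. It remains to identify $\xi_t$ with the claimed expression.

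Next, I would use the structure of the equation to show that the support in the trait variable remains $\{x_1,x_2\}$ and that the marker distribution conditionally on each trait is frozen. Testing against $\phi(x,u) = \psi(x)\ind_{\X\setminus\{x_1,x_2\}}(x)$ and using a Gronwall argument on the right-hand side of the equation (which factorises each term through $\xi_s(dx,du)$ without any mutation/transport in the variable $x$ or $u$) shows that $\xi_t$ has support in $\{x_1,x_2\}\times \U$ for all $t\ge 0$. Writing then $\xi_t(dx,du) = n_t(x_1)\delta_{x_1}(dx)\sigma_t^1(du) + n_t(x_2)\delta_{x_2}(dx)\sigma_t^2(du)$ with $\sigma_t^i\in \mathcal{P}(\U)$ and testing against $\phi(x,u) = \ind_{x_i}(x) g(u)$ for $g\in \Co(\U,\R)$ gives
\begin{equation*}
n_t(x_i)\langle \sigma_t^i, g\rangle = n_0(x_i)\langle \pi_0(x_i,.), g\rangle + \int_0^t \big(b(x_i)-d(x_i)-\eta(x_i)C\ast \xi_s(x_i)\big) n_s(x_i)\langle \sigma_s^i, g\rangle\, ds.
\end{equation*}
Taking $g\equiv 1$ produces the Lotka-Volterra system \eqref{largepop-dimorphique} for $(n_t(x_1),n_t(x_2))$, and comparing the two identities then forces $\langle \sigma_t^i,g\rangle = \langle \pi_0(x_i,.),g\rangle$ for every $g$, hence $\sigma_t^i = \pi_0(x_i,.)$ for all $t\ge 0$. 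This gives the claimed form of the limit.

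Finally, I would establish the stability claim. Assumption \ref{IIF} applied to $(x_1,x_2)$ (with $x_2$ as the putative winner) reads $f(x_2;x_1)>0$ and $f(x_1;x_2)<0$, i.e.\ $\widehat n_{x_2}$ is invadable by no interior of the Lotka-Volterra phase portrait other than towards $(0,\widehat n_{x_2})$. A standard analysis of competitive two-species Lotka-Volterra systems (for which I would follow the argument recalled in the appendix of Champagnat \cite{champagnatthesis}) then shows that the only equilibria are $(0,0)$, $(\widehat n_{x_1}, 0)$, $(0, \widehat n_{x_2})$ and possibly an interior equilibrium; under the sign conditions of Assumption \ref{IIF} (second alternative) the first three equilibria are unstable except for $(0,\widehat n_{x_2})$, which attracts every trajectory starting from $\R_{>0}\times \R_{>0}$. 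Since by assumption $n_0(x_1),n_0(x_2)>0$, the trajectory $(n_t(x_1),n_t(x_2))$ stays in the open positive quadrant and converges to $(0,\widehat n_{x_2})$ as $t\to\infty$.

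The only genuinely delicate step is the phase-plane analysis, because Assumption \ref{IIF} is an asymmetric condition on $(x_1,x_2)$ and one must rule out an interior stable equilibrium; the isocline argument recalled in \cite{champagnatthesis} resolves this, and all the other steps are soft consequences of Proposition \ref{prop:gdepop} and uniqueness of the measure-valued ODE.
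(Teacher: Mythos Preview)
Your proposal is correct and follows the same route as the paper, which simply states that the convergence is a consequence of Proposition \ref{prop:gdepop} and leaves the identification of the limit and the stability analysis implicit. You have filled in exactly the details the paper omits: the preservation of the trait support, the freezing of the conditional marker distributions via testing against functions of the form $\ind_{x_i}(x)g(u)$, and the phase-plane argument (with reference to \cite{champagnatthesis}) for the stability of $(0,\widehat n_{x_2})$ under Assumption \ref{IIF}. The only cosmetic point is that the test functions $\ind_{x_i}(x)g(u)$ are not continuous on $\X\times\U$, whereas Proposition \ref{prop:gdepop} is stated for $\phi\in\Co(\X\times\U,\R)$; but since the limiting equation has no transport or mutation term and the initial condition is supported on $\{x_1,x_2\}$, the support is preserved and one may approximate these indicators by continuous functions separating $x_1$ from $x_2$, so no genuine issue arises.
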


\noindent It can be seen that the conditional distributions of the marker, given the trait $x_1$ or $x_2$ remain constant.

\me
\begin{proof}The convergence in large population of $(\nu^K ; K\in \N^*)$
is a consequence of Proposition \ref{prop:gdepop}. 
%
\end{proof}

\end{document}